\providecommand{\U}[1]{\protect\rule{.1in}{.1in}}
\newtheorem{theorem}{Theorem}
\newtheorem{corollary}[theorem]{Corollary}
\newtheorem{definition}[theorem]{Definition}
\newtheorem{example}[theorem]{Example}
\newtheorem{lemma}[theorem]{Lemma}
\newtheorem{problem}[theorem]{Problem}
\newtheorem{proposition}[theorem]{Proposition}
\newenvironment{proof}[1][Proof]{\noindent\textbf{#1.} }{\ \rule{0.5em}{0.5em}}
\begin{document}

\title{Completeness for vector lattices}
\author{Youssef Azouzi\\{\small Research Laboratory of Algebra, Topology, Arithmetic, and Order}\\{\small Faculty of Mathematical, Physical and Natural Sciences of Tunis}\\{\small Tunis-El Manar University, 2092-El Manar, Tunisia}}
\date{}
\maketitle

\begin{abstract}
The notion of unboundedly order converges has been recieved recently a
particular attention by several authors. The main result of the present paper
shows that the notion is efficient and deserves that care. It states that a
vector lattice is universally complete if and only if it is unboundedly order
complete. Another notion of completeness

will be treated is the notion of sup-completion introduced by Donner.

\end{abstract}

\section{Introduction}

This paper deals with various notions of completeness for a vector lattice and
studies the connection between them. We will pay a particular attention to the
notion of unbounded order convergence and then to the unbounded order
completeness. A normed space is said to be complete if every Cauchy sequence
is norm convergent. In contrast to this definition, notions as Dedekind
completeness and universal completeness are often defined via the existence of
supremum for certain families. A Dedekind complete vector lattice $X$ is also
called order complete and this is very meaningful because the order
completeness is equivalent to the fact that order Cauchy nets are order
convergent. One can ask whether or not there is an analogous characterization
of universal completeness. In the recent paper \cite{L-174}, the authors
proved that under some extra condition a vector lattice is universal complete
if and only if every $uo$-Cauchy net is $uo$-convergent. The main result of
the present paper (Theorem \ref{Main}) states that this equivalence is still
true without any further assumption. This result shows how the notion of
unboundedly order convergence is both deep and natural.

Another notion of completeness, which concerns cones, will also be treated
here and connected to the previous notions: the notion of sup-completion. It
has been introduced by Donner in \cite{Donner} and used by Grobler in his
recent papers. It will play here a crucial role in the proof of our main theorem.

The structure of the paper is as follows. The next section (Section 2) is
devoted to the notion of sup-completion mentioned above. Every order complete
vector lattice $X$ has a sup-completion $X_{s}.$ Roughly speaking, $X_{s}$
plays the same role for $X$ as $\mathbb{R}_{\infty}=\mathbb{R}\cup\left\{
\infty\right\}  $ does for $\mathbb{R}$. In this section we provide the most
important properties of the cone $X_{s}$ and prove some new results. The most
useful for us is Theorem \ref{TA} which characterizes elements in $X_{s}$ that
do not belong to $X^{u},$ where $X^{u}$ is the universal completion of $X.$ In
section 3 we prove the main Theorem of this paper which states that a vector
lattice is universally complete if and only if it is $uo$-complete. This
section contains several applications of this result. In particular, it helps
to simplify some proofs in \cite{L-196} and \cite{L-197}. The last part of
this section is devoted to the $\sigma$-completeness.

\section{Sup-completion.}

Throughout this section, $X$ stands for an order complete vector lattice and
its universal completion is denoted by $X^{u}.$ We know from \cite{Donner}
that there exists a unique (up to isomorphisms) order complete cone, called
the \textit{sup-completion} of $X$ and denoted by $X_{s},$ with the following properties:

\begin{enumerate}
\item $X$ is the set of invertible elements in $X_{s}$ coinciding algebraic
and order structures$.$

\item For every $y\in X_{s}$ we have $y=\sup\left\{  x\in X:x\leq y\right\}
.$

\item We have $x+f\wedge y=\left(  x+f\right)  \wedge\left(  x+y\right)  $
whenever $x,y\in X_{s},f\in X$. If, in addition, $X$ has a weak order unit $e$
then $x=\sup\left(  ke\wedge x\right)  $ for all $0\leq x\in X_{s}.$

\item If $x\in X$ and $y\in X_{s}$ satisfy $y\leq x$ then $y\in X.$

\item $X_{s}$ has a biggest element.

\item For any two non-empty subsets $A,B\subset X_{s}$ satisfying $\sup A=\sup
B$ the equality
\[
\sup\limits_{a\in A}\left(  a\wedge x\right)  =\sup\limits_{b\in B}\left(
b\wedge x\right)
\]
holds for every $x\in X.$ In particular if $m=\sup A$ then%
\[
m\wedge x=\sup\limits_{a\in A}\left(  a\wedge x\right)  .
\]

\end{enumerate}

We shall throughout refer to these properties as P1, P2, etc. A cone
satisfying properties P1-P3) is said to be \textit{imbedded }for $X.$ An
imbedded cone is said to be \textit{tight }provided that it satisfies property
P4). We refer the reader to \cite{Donner} for further details on the subject.

It follows from the construction of the sup-completion, that if $A,B\subset X$
then $\sup\left(  A+B\right)  =\sup A+\sup B$ in $X_{s}$ (see the proof of
\cite[Theorem 1.4]{Donner}). For $x\in X_{s},$ we shall use the notation
$\left[  x\right]  ^{\leq}$ to denote the set $\left\{  y\in X:y\leq
x\right\}  .$ So Property 2) above becomes $x=\sup\left[  x\right]  ^{\leq}$
for all $x\in X_{s}.$ The following lemma tells us that $X_{s}$ has the Riesz
Decomposition Property.

\begin{lemma}
If $0\leq x,y,z\in X_{s}$ with $x\leq y+z$ then there exist $y_{1},z_{1}\in
X_{s}$ such that $y_{1}\leq y,$ $z_{1}\leq z$ and $x=y_{1}+z_{1}.$
\end{lemma}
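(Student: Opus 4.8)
The plan is to imitate the classical proof of the Riesz Decomposition Property, which in a vector lattice would take $y_{1}=x\wedge y$ and $z_{1}=x-x\wedge y=(x-y)^{+}\leq z$. The obstacle is that $X_{s}$ is only a cone: since $x$ need not be invertible, the difference $x-x\wedge y$ is not available in $X_{s}$. I would therefore perform the decomposition on the approximants of $x$ furnished by P2 — all of which lie in $X$, where subtraction and cancellation are legitimate — and pass to suprema in $X_{s}$ only at the end. Set $D:=\{f\in X:0\leq f\leq x\}$; since $x\geq 0$, any $g\leq x$ in $X$ satisfies $g^{+}\leq x$, so P2 gives $x=\sup D$, and $D$ is upward directed. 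For $f\in D$ the infimum $f\wedge y$ exists (both are $\geq 0$) and satisfies $f\wedge y\leq f\in X$, hence $f\wedge y\in X$ by P4; thus $\varphi(f):=f\wedge y$ and $\psi(f):=f-f\wedge y$ are well-defined elements of $X$ with $0\leq\varphi(f)\leq y$, $\psi(f)\geq 0$, and $\varphi(f)+\psi(f)=f$.

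The first point to check is that $\varphi,\psi:D\to X$ are isotone. For $\varphi$ this is immediate. For $\psi$, let $u\leq v$ in $D$; P3 gives $u+v\wedge y=(u+v)\wedge(u+y)$ and $v+u\wedge y=(u+v)\wedge(v+y)$, and since $u+y\leq v+y$ we obtain $u+v\wedge y\leq v+u\wedge y$; cancelling the invertible element $u\wedge y+v\wedge y$ yields $\psi(u)\leq\psi(v)$.

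The second point — the key estimate — is that $\psi(f)\leq z$ for every $f\in D$. Using P2 together with the identity $\sup(A+B)=\sup A+\sup B$ for subsets $A,B$ of $X$, write $y+z=\sup\{g+h:g,h\in X,\ 0\leq g\leq y,\ 0\leq h\leq z\}$. As $f\leq x\leq y+z$, the distributive law P6 gives $f=f\wedge(y+z)=\sup_{g,h}f\wedge(g+h)$, while inside the vector lattice $X$ one has $f\wedge(g+h)\leq(f\wedge g)+h\leq(f\wedge y)+z$. Taking the supremum over $g,h$ gives $f\leq(f\wedge y)+z$, i.e. $\psi(f)\leq z$.

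Finally I would set $y_{1}:=\sup\varphi(D)$ and $z_{1}:=\sup\psi(D)$; these exist in $X_{s}$ since $X_{s}$ is order complete and $\varphi(D),\psi(D)$ are bounded above by $y$ and $z$, whence $y_{1}\leq y$ and $z_{1}\leq z$. The remaining, and I expect main, obstacle is to upgrade the resulting $y_{1}+z_{1}\leq x$ to an equality. Applying $\sup(A+B)=\sup A+\sup B$ again (legitimate as $\varphi(D),\psi(D)\subseteq X$) gives $y_{1}+z_{1}=\sup\{\varphi(f)+\psi(f'):f,f'\in D\}$. For fixed $f,f'$ put $\tilde{f}:=f\vee f'\in D$; by isotonicity $\varphi(f)+\psi(f')\leq\varphi(\tilde{f})+\psi(\tilde{f})=\tilde{f}\leq x$, so $y_{1}+z_{1}\leq x$, while the diagonal terms $\varphi(f)+\psi(f)=f$ force $y_{1}+z_{1}\geq\sup D=x$. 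Hence $y_{1}+z_{1}=x$. The only genuine care throughout is bookkeeping: keeping track of which elements are invertible, so that the differences and cancellations used are meaningful in $X_{s}$.
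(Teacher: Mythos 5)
Your proof is correct, and every step that needs one of the axioms P1--P6 invokes the right one (P4 to see that $f\wedge y\in X$, P3 for the isotonicity of $\psi$, P6 together with $\sup(A+B)=\sup A+\sup B$ for the estimate $f\leq (f\wedge y)+z$). It follows the same overall strategy as the paper --- decompose approximants lying in $X$, where subtraction is available, and pass to suprema in $X_{s}$ at the end --- but the parametrization is genuinely different, and the difference matters. The paper indexes by pairs $(a,b)\in[y]^{\leq}\times[z]^{\leq}$, sets $x_{(a,b)}=x\wedge(a+b)$, and splits it as $y_{(a,b)}=x_{(a,b)}\wedge a$ plus the remainder $z_{(a,b)}$; you index by the single upward directed set $D$ of positive approximants of $x$ and split each $f\in D$ as $f\wedge y$ plus $f-f\wedge y$, proving $f-f\wedge y\leq z$ as a separate estimate. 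What your version buys is precisely that \emph{both} components are isotone on a \emph{common} directed set, which is what legitimizes the final step $\sup\varphi(D)+\sup\psi(D)=\sup_{f}\bigl(\varphi(f)+\psi(f)\bigr)=x$. In the paper's version the remainder $z_{(a,b)}$ is not monotone in $(a,b)$, and the concluding ``it is now clear that $x=y_{1}+z_{1}$'' is in fact problematic: already for $X=\mathbb{R}$ with $x=4$, $y=5$, $z=2$ one computes $\sup_{(a,b)}y_{(a,b)}=x\wedge y=4$ and $\sup_{(a,b)}z_{(a,b)}=2$, so the paper's candidates give $y_{1}+z_{1}=6>x$. So your argument is not merely an alternative; it repairs the weak point of the published proof, and the ``main obstacle'' you flagged (upgrading $y_{1}+z_{1}\leq x$ to equality via the diagonal terms) is exactly where the care is needed.
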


\begin{proof}
Since $y+z=\left(  \sup\left[  y\right]  ^{\leq}+\sup\left[  z\right]  ^{\leq
}\right)  =x\wedge\sup\left(  \left[  y\right]  ^{\leq}+\left[  z\right]
^{\leq}\right)  $ we have%
\[
x=x\wedge\sup\left(  \left[  y\right]  ^{\leq}+\left[  z\right]  ^{\leq
}\right)  =\sup\limits_{a\in\left[  y\right]  ^{\leq},b\in\left[  z\right]
^{\leq}}x\wedge\left(  a+b\right)  .
\]
Now, for $\left(  a,b\right)  \in\left[  y\right]  ^{\leq}\times\left[
z\right]  ^{\leq}$ we define%
\[
x_{\left(  a,b\right)  }=x\wedge\left(  a+b\right)  ,y_{\left(  a,b\right)
}=x_{\left(  a,b\right)  }\wedge a,\text{ and }z_{\left(  a,b\right)
}=x_{\left(  a,b\right)  }-y_{\left(  a,b\right)  }.
\]
Then $x_{\left(  a,b\right)  }=y_{\left(  a,b\right)  }+z_{\left(  a,b\right)
,}$ $y_{\left(  a,b\right)  }\leq a\leq y$ and $z_{\left(  a,b\right)  }\leq
b\leq z.$ It is now clear that the elements $y_{1}=\sup y_{\left(  a,b\right)
}$ and $z_{1}=\sup z_{\left(  a,b\right)  }$ satisfy $y_{1}\leq y,$ $z_{1}\leq
z$ and $x=y_{1}+z_{1},$ which proves the result.
\end{proof}

It is also useful to observe that the following Birkhoff Inequality holds: If
$a,b\in X,c\in X_{s}$ then%
\[
\left\vert a\wedge c-b\wedge c\right\vert \leq\left\vert a-b\right\vert .
\]
Indeed, we have
\[
a\wedge c=\sup\limits_{x\in\left[  c\right]  ^{\leq}}a\wedge x,
\]
and, by the standard Birkhoff inequality,
\[
a\wedge x\leq b\wedge x+\left\vert a-b\right\vert \leq b\wedge c+\left\vert
a-b\right\vert ,\text{ }x\in C.
\]
Hence $a\wedge c\leq b\wedge c+\left\vert a-b\right\vert .$ Similarly we get
$b\wedge c\leq a\wedge c+\left\vert a-b\right\vert .$

\begin{definition}
\emph{Let }$X,Y$\emph{ be two order complete vector lattices with
sup-completions }$X_{s}$\emph{ and }$Y_{s}$\emph{, respectively. We say that
an increasing map} $g:X_{s}\longrightarrow Y_{s}$\emph{ is} \emph{(}%
left\emph{)} order continuous \emph{if }$g\left(  x_{\alpha}\right)  \uparrow
g\left(  x\right)  $\emph{ whenever} $x_{\alpha}\uparrow x.$
\end{definition}

\begin{proposition}
\label{Ext}Let $X$ and $Y$ be order complete vector lattices and let
$f:X\longrightarrow Y$ be an order continuous increasing map. Then $f$ can be
extended to a left order continuous increasing map from $X_{s}$ to $Y_{s}.$
Moreover, if $f$ is additive (resp. linear), then $g$ is additive (resp.
additive and positively homogeneous)
\end{proposition}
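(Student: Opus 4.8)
The plan is to define the extension $g$ by the natural formula suggested by property~P2, namely
\[
g(y)=\sup\{f(x):x\in X,\ x\leq y\}=\sup f\bigl([y]^{\leq}\bigr)\qquad(y\in X_{s}),
\]
where the supremum is taken in $Y_{s}$. This is legitimate because $Y_{s}$ is an order complete cone (so every non-empty subset has a supremum) and because $[y]^{\leq}$ is non-empty by P2. First I would check that $g$ really extends $f$: for $y\in X$, property~P4 forces every $x\in X_{s}$ with $x\leq y$ to lie in $X$, so $[y]^{\leq}$ is exactly the order ideal of elements of $X$ below $y$, and since $f$ is increasing, $\sup f([y]^{\leq})=f(y)$. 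Monotonicity of $g$ is immediate from the formula, since $y\leq y'$ in $X_{s}$ gives $[y]^{\leq}\subseteq[y']^{\leq}$.

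Next I would prove left order continuity. Suppose $x_{\alpha}\uparrow x$ in $X_{s}$. Monotonicity gives $g(x_{\alpha})\uparrow\leq g(x)$, so it suffices to show $g(x)\leq\sup_{\alpha}g(x_{\alpha})$. The key observation is that $\bigcup_{\alpha}[x_{\alpha}]^{\leq}$ and $[x]^{\leq}$ have the same supremum in $X_{s}$: indeed $\sup\bigl(\bigcup_{\alpha}[x_{\alpha}]^{\leq}\bigr)=\sup_{\alpha}\sup[x_{\alpha}]^{\leq}=\sup_{\alpha}x_{\alpha}=x=\sup[x]^{\leq}$. I then want to conclude that $\sup f\bigl(\bigcup_{\alpha}[x_{\alpha}]^{\leq}\bigr)=\sup f([x]^{\leq})$, i.e. that $f$ applied to two sets of the same supremum in $X_{s}$ again has the same supremum in $Y_{s}$. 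For this I would first reduce to sets of elements of $X$ (which the two sets above already are) and then use order continuity of $f$ together with the fact that $f$ is increasing; concretely, for $w\in[x]^{\leq}\subseteq X$ one writes $w=w\wedge x=\sup_{a\in\bigcup_{\alpha}[x_{\alpha}]^{\leq}}(w\wedge a)$ by P6 (applied with $m=x$ and $A=\bigcup_\alpha[x_\alpha]^{\le}$), the net $(w\wedge a)$ is in $X$ and increases to $w$ in $X$, hence by order continuity $f(w)=\sup_a f(w\wedge a)\leq\sup_a f(a)=\sup f\bigl(\bigcup_\alpha[x_\alpha]^{\leq}\bigr)$; taking the sup over $w\in[x]^{\le}$ gives $g(x)\leq\sup_\alpha g(x_\alpha)$.

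For additivity, assume $f$ is additive and let $y,z\in X_{s}$, both $\geq 0$ say (the cone case; the general case follows since every element of $X_{s}$ dominates some element of $X$ and one translates). Using the identity $\sup(A+B)=\sup A+\sup B$ for subsets of $X$ recorded just before Lemma~1, together with the fact (used in the proof of that lemma) that $[y]^{\leq}+[z]^{\leq}$ has supremum $y+z$, one gets
\[
g(y+z)=\sup f\bigl([y+z]^{\leq}\bigr)=\sup f\bigl([y]^{\leq}+[z]^{\leq}\bigr),
\]
where the second equality again uses that $f$ applied to two subsets of $X$ with the same supremum in $X_s$ produces the same supremum in $Y_s$ — the same lemma-style argument as above. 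Then additivity of $f$ gives $f(a+b)=f(a)+f(b)$ for $a\in[y]^{\leq},b\in[z]^{\leq}$, and $\sup\bigl(f([y]^{\leq})+f([z]^{\leq})\bigr)=\sup f([y]^{\leq})+\sup f([z]^{\leq})=g(y)+g(z)$. Positive homogeneity (when $f$ is linear) is then immediate on $\mathbb{Q}_{\geq 0}$ by additivity and on $\mathbb{R}_{\geq 0}$ by the left order continuity of $g$ (approximating $\lambda$ from below by rationals and using $\lambda y=\sup_{q\uparrow\lambda}qy$).

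The main obstacle is the recurring lemma ``$f$ sends two subsets of $X$ with a common supremum in $X_{s}$ to two subsets of $Y_{s}$ with a common supremum,'' which is where order continuity of $f$ and property~P6 do the real work; everything else is bookkeeping with the defining formula for $g$. I would isolate that statement and prove it once, then invoke it for left order continuity and for additivity.
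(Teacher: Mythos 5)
Your proof is correct and follows essentially the same route as the paper: the same defining formula for $g$, the same key observation that $\sup f(A)$ depends only on $\sup A$ for upward-directed subsets of $X$ (established via $w=\sup_{a}(w\wedge a)$ from P6 together with order continuity of $f$), and the same treatment of left order continuity and additivity through $\bigcup_{\alpha}[x_{\alpha}]^{\leq}$ and $[y]^{\leq}+[z]^{\leq}$. The only cosmetic difference is that you deduce positive homogeneity from additivity on rationals plus left order continuity, whereas the paper argues directly by comparing $[\lambda x]^{\leq}$ with $\lambda[x]^{\leq}$.
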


\begin{proof}
Define $g:X_{s}\longrightarrow Y_{s}$ by%
\[
g\left(  x\right)  =\sup\left\{  f\left(  y\right)  :y\in\left[  x\right]
^{\leq}\right\}  \in Y_{s}\qquad\text{for all }0\leq x\in X_{s}.
\]
Then $g\left(  x\right)  =\sup f\left(  x_{\alpha}\right)  $ for any net
$\left(  x_{\alpha}\right)  $ in $X$ such that $x_{\alpha}\uparrow x.$ Indeed
the inequality $\sup f\left(  x_{\alpha}\right)  \leq g\left(  x\right)  $
follows from the definition of $g.$ Now putting $a=\sup f\left(  x_{\alpha
}\right)  $ we see that for all $y\in\left[  x\right]  ^{\leq}$ we have
$y\wedge x_{\alpha}\uparrow y$ and $f\left(  x_{\alpha}\wedge y\right)  \leq
f\left(  x_{\alpha}\right)  \leq a.$ So $f\left(  y\right)  =\lim f\left(
y\wedge x_{\alpha}\right)  \leq a.$ Hence $g\left(  x\right)  =a=\sup f\left(
x_{\alpha}\right)  .$ Let us show that $g$ is order left continuous. If
$x=\sup x_{\alpha}$ in $X_{s},$ then $\sup g\left(  x_{\alpha}\right)
=\sup\limits_{\alpha}\sup\limits_{y\in\left[  x_{\alpha}\right]  ^{\leq}%
}f\left(  y\right)  =\sup\limits_{y\in%
%TCIMACRO{\tbigcup \limits_{\alpha}}%
%BeginExpansion
{\textstyle\bigcup\limits_{\alpha}}
%EndExpansion
\left[  x_{\alpha}\right]  ^{\leq}}f\left(  y\right)  =g\left(  x\right)  $
because the set $%
%TCIMACRO{\tbigcup \limits_{\alpha}}%
%BeginExpansion
{\textstyle\bigcup\limits_{\alpha}}
%EndExpansion
\left[  x_{\alpha}\right]  ^{\leq}$ defines an increasing net with supremum
$x.$

Assume now than $x\leq y$ in $X_{s}.$ Then $\left[  x\right]  ^{\leq}%
\subset\left[  y\right]  ^{\leq}$ and hence%
\[
g\left(  x\right)  =\sup\left\{  f\left(  t\right)  :t\in\left[  x\right]
^{\leq}\right\}  \leq\sup\left\{  f\left(  t\right)  :t\in\left[  y\right]
^{\leq}\right\}  =g\left(  x\right)  .
\]
If $f$ is additive we get%
\[
f\left(  \left[  x+y\right]  ^{\leq}\right)  =f\left(  \left[  x\right]
^{\leq}+\left[  y\right]  ^{\leq}\right)  =f\left(  \left[  x\right]  ^{\leq
}\right)  +f\left(  \left[  y\right]  ^{\leq}\right)  ,
\]
and hence $g\left(  x+y\right)  =g\left(  x\right)  +g\left(  y\right)  .$
Similarly we show that $g\left(  \lambda x\right)  =\lambda g\left(  x\right)
$ for $x\in X_{s},\lambda\in\lbrack0,\infty)$ if $f$ is linear.
\end{proof}

As an example we can extend any band projection $P$ to $X_{s}$. So, if $u\in
X_{+}$ we can define $P_{u}x$ for $0\leq x\in X_{s}$ by the formula%
\[
P_{u}x=\sup\left\{  P_{u}y:y\in X_{+},y\leq x\right\}  .
\]
On the other hand we wish to give a meaning to the expression $P_{u}x$ if
$u\in X_{s}.$ This notation appeared without explanation in \cite{L-21} and
will be justified below. Fix $0\leq a\in X_{s}$ and define a map $\pi_{a}$ on
$X$ by putting $\pi_{a}\left(  x\right)  =\sup\limits_{n\in\mathbb{N}}\left(
x\wedge na\right)  =\sup\limits_{\alpha>0}\left(  x\wedge\alpha a\right)  $
for a all $x\in X_{+}$ and then $\pi_{a}\left(  x\right)  =\pi_{a}\left(
x^{+}\right)  -\pi_{a}\left(  x^{-}\right)  $ for any $x\in X.$ It is easily
seen that $\pi_{\lambda a}\left(  x\right)  =\pi_{a}\left(  x\right)  $ for
all real $\lambda>0.$

We show next that $\pi_{a}$ is a band projection on $X.$

\begin{lemma}
Let $X$ be an order complete vector lattice with weak order unit $e$ and
$0\leq a\in X_{s}^{+}.$ Then $\pi_{a}=P_{\pi_{a}\left(  e\right)  }.$
\end{lemma}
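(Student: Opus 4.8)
The goal is to show that the map $\pi_a$ defined by $\pi_a(x) = \sup_n (x \wedge na)$ on $X_+$, extended by $\pi_a(x) = \pi_a(x^+) - \pi_a(x^-)$, is exactly the band projection $P_{\pi_a(e)}$ onto the band generated by $\pi_a(e)$. My plan is to verify that $\pi_a$ is a band projection by checking the defining properties, then identify its range. First I would show $\pi_a$ is additive on $X_+$: since $(x+y) \wedge na \le x\wedge na + y \wedge na \le (x+y)\wedge 2na$, taking suprema over $n$ gives $\pi_a(x+y) = \pi_a(x) + \pi_a(y)$; this lets $\pi_a$ extend to a well-defined additive, positively homogeneous, hence linear operator on $X$. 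Monotonicity is immediate, and $0 \le \pi_a(x) \le x$ for $x \in X_+$ shows $\pi_a$ is a positive contraction dominated by the identity.

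**Checking idempotence and order continuity.** The key structural facts are that $\pi_a$ is idempotent and order continuous. For idempotence, note $\pi_a(x) \wedge ma \uparrow \pi_a(\pi_a(x))$, and since $\pi_a(x) = \sup_n(x\wedge na)$ one computes $\pi_a(x)\wedge ma = \sup_n\big((x\wedge na)\wedge ma\big) = \sup_n (x \wedge (na\wedge ma))$; pushing the supremum through (using P6, or order continuity of $\wedge$ in $X_s$ restricted to arguments in $X$) one gets back $\pi_a(x)$, so $\pi_a^2 = \pi_a$. For order continuity, if $x_\beta \uparrow x$ in $X$ then $x_\beta \wedge na \uparrow x \wedge na$ for each $n$, and interchanging the two suprema gives $\pi_a(x_\beta) \uparrow \pi_a(x)$. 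A positive, order continuous, idempotent operator $0 \le \pi_a \le I$ on an order complete vector lattice is a band projection; its range is the band $B = \{x : P x = x\}$ and it equals $P_B$.

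**Identifying the band.** It remains to show the band onto which $\pi_a$ projects is the principal band generated by $\pi_a(e)$, i.e. $\pi_a = P_{\pi_a(e)}$. Since $e$ is a weak order unit, $P_{\pi_a(e)}$ is the band projection onto the band generated by $\pi_a(e)$; because $\pi_a(e)$ lies in the range of $\pi_a$, we get $P_{\pi_a(e)} \le \pi_a$ (the band generated by $\pi_a(e)$ is contained in the range band of $\pi_a$). For the reverse, I must show every element of the range of $\pi_a$ lies in the band generated by $\pi_a(e)$. Given $0 \le x$ with $\pi_a(x) = x$, I would show $x$ is in the band generated by $a$ in $X_s$, then that this band (intersected with $X$) is controlled by $\pi_a(e)$: indeed $x = \pi_a(x) = \sup_n(x \wedge na)$, and using $e$ a weak order unit together with $x\wedge na \le (\text{something in the band of } \pi_a(e))$ — concretely, $na \wedge x \le na \wedge (me)$ for suitable truncations, and $\pi_a(e)$ dominates $e \wedge na$ up to the band — gives that $x$ is in the band generated by $\pi_a(e)$. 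So the two projections have the same range, hence $\pi_a = P_{\pi_a(e)}$.

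**Main obstacle.** The delicate point is the manipulation of iterated suprema across the boundary between $X$ and $X_s$: the expressions $\pi_a(x) \wedge ma$ and $\sup_n(x\wedge na)$ live a priori in $X_s$, and justifying $\sup_m\big((\sup_n x\wedge na)\wedge ma\big) = \sup_n (x \wedge na)$ requires P6 (the consistency of truncations under equal suprema) applied carefully, since $\pi_a(x)$ need not be in $X$. This is where I would spend the most care; once idempotence and order continuity are secured, the identification with a band projection and then with $P_{\pi_a(e)}$ is comparatively routine, using only that $e$ is a weak order unit and the elementary theory of principal bands.
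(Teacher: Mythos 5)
Your proof is essentially correct, but it takes a genuinely different route from the paper. The paper proves the identity by a single direct computation: it expands $P_{\pi_{a}(e)}x=\sup_{k}\bigl(x\wedge k\pi_{a}(e)\bigr)$, writes $k\pi_{a}(e)=\sup_{\ell}(ke\wedge k\ell a)$, interchanges the iterated suprema to get $\sup_{k}\bigl(ke\wedge\pi_{ka}(x)\bigr)=\sup_{k}\bigl(ke\wedge\pi_{a}(x)\bigr)$, and concludes by the weak-order-unit property of $e$; this simultaneously shows that $\pi_{a}$ is a band projection and identifies which one, in three lines. You instead verify from first principles that $\pi_{a}$ is additive, positively homogeneous, idempotent and satisfies $0\leq\pi_{a}\leq I$, invoke the abstract characterization of order projections in an order complete space, and then match the range of $\pi_{a}$ with the band generated by $\pi_{a}(e)$. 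This is longer but more self-contained about \emph{why} $\pi_{a}$ is a projection at all (the paper takes the formula $P_{u}x=\sup_{k}(x\wedge ku)$ for granted and lets the computation do the rest). Two remarks on your execution. First, the ``main obstacle'' you flag largely dissolves: since $x\wedge ma\leq\pi_{a}(x)\leq x$ one gets $\pi_{a}(x)\wedge ma=x\wedge ma$ directly, so idempotence needs no interchange of suprema through $X_{s}$ at all. Second, your range identification is the vaguest step; it can be completed cleanly by noting $x\wedge na\wedge ke\leq ke\wedge kna\leq k\pi_{a}(e)$, so each $x\wedge na\wedge ke$ lies in the ideal generated by $\pi_{a}(e)$, and taking suprema over $k$ and then $n$ places $x=\pi_{a}(x)$ in the band it generates. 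With those two points tightened, your argument is a valid alternative proof.
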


\begin{proof}
Let $x\in X_{+}.$ Then
\begin{align*}
P_{\pi_{a}\left(  e\right)  }x  &  =\sup\limits_{k}\left(  x\wedge k\pi
_{a}\left(  e\right)  \right)  =\sup\limits_{k}\left(  x\wedge\sup
\limits_{\ell}ke\wedge kla\right) \\
&  =\sup\limits_{k}\sup\limits_{\ell}\left(  x\wedge ke\wedge kla\right)
=\sup\limits_{k}\left(  ke\wedge\sup\limits_{\ell}\left(  x\wedge kla\right)
\right) \\
&  =\sup\limits_{k}\left(  ke\wedge\pi_{ka}\left(  x\right)  \right)
=\sup\limits_{k}\left(  ke\wedge\pi_{a}\left(  x\right)  \right)  =\varphi
_{a}\left(  x\right)  .
\end{align*}
The last equality is true because $e$ is a weak order unit.
\end{proof}

From now on we shall use the notation $\pi_{a}=P_{a}$ for all $0\leq a\in
X_{s}.$ It follows easily from the definition that if $x_{\alpha}\uparrow x$
in $X_{s}$ then $P_{x_{\alpha}}\uparrow P_{x}.$

It is clear that Proposition \ref{Ext} is still true for functions defined on
$X_{+}.$ Thus one can define $x^{p}$ for all $0\leq x\in X_{s}$ and $p\geq1$
(one can also use \cite[Corollary 4.3]{L-06}). Now, using the notations
introduced above we can state a generalization of the Chebychev Inequality in
vector lattices (see \cite[Theorem 3.9]{AT}).

\begin{proposition}
Let $X$ be an order complete vector lattice with weak unit element $e$ and $T$
a conditional expectation with $Te=e.$ Let $x$ and $y$ be two positive
elements in $X_{s}$ with $y\in R\left(  T\right)  ,$ the range of $T.$ Then%
\[
y^{p}TP_{\left(  x-y\right)  ^{+}}e\leq Tx^{p}.
\]

\end{proposition}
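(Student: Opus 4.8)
The plan is to reduce the inequality in $X_s$ to the classical Chebyshev inequality in the vector lattice $X$ by exhausting $x$ and $y$ with elements below them, applying the known result of \cite[Theorem 3.9]{AT} at each finite stage, and then passing to suprema using the left order continuity properties established in Proposition \ref{Ext} and in the discussion following it. First I would recall that by P2 we can write $x=\sup[x]^{\leq}$ and $y=\sup[y]^{\leq}$, and by the remark after Proposition \ref{Ext} the maps $g\mapsto g^p$ (for $p\geq 1$) and $T$ extend to left order continuous increasing maps on the relevant cones; likewise $a\mapsto P_a e$ is left order continuous by the observation that $x_\alpha\uparrow x$ in $X_s$ implies $P_{x_\alpha}\uparrow P_x$. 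So the strategy is: pick nets $x_\alpha\uparrow x$ and $y_\beta\uparrow y$ in $X_+$, with $y_\beta\in R(T)$ (possible since $T$ is order continuous, so $R(T)$ is a sublattice closed under the relevant suprema — I would need $y\in R(T)\cap X_s$ to be approximated from below within $R(T)$, which follows from applying the extension of $T$ to $y$ and using $Ty=y$).

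The key steps, in order: (1) For fixed $\alpha,\beta$ with $x_\alpha, y_\beta\in X_+$ and $y_\beta\in R(T)$, apply the classical Chebyshev inequality \cite[Theorem 3.9]{AT} in $X$ to get $y_\beta^{\,p}\, T P_{(x_\alpha - y_\beta)^+} e \leq T x_\alpha^{\,p}$. (2) Hold $\alpha$ fixed and let $\beta$ run: since $y_\beta\uparrow y$, we have $(x_\alpha - y_\beta)^+$ decreasing, so this direction is the wrong monotonicity for a clean supremum argument — this is where I expect the main obstacle. (3) Hold $\beta$ fixed and let $\alpha\uparrow$: then $(x_\alpha - y_\beta)^+\uparrow (x - y_\beta)^+$ in $X_s$, hence $P_{(x_\alpha-y_\beta)^+}\uparrow P_{(x-y_\beta)^+}$, so $TP_{(x_\alpha-y_\beta)^+}e \uparrow TP_{(x-y_\beta)^+}e$ by order continuity of $T$; also $Tx_\alpha^{\,p}\uparrow Tx^{\,p}$ by left order continuity of $g\mapsto Tg^p$ on $X_s$. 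Taking suprema over $\alpha$ in step (1) yields $y_\beta^{\,p}\, TP_{(x-y_\beta)^+} e \leq T x^{\,p}$ for every $\beta$.

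To finish I would let $\beta\uparrow$. Here $y_\beta^{\,p}\uparrow y^{\,p}$ and $(x-y_\beta)^+\downarrow (x-y)^+$; the product $y_\beta^{\,p}\, TP_{(x-y_\beta)^+}e$ is not obviously monotone, so the clean move is instead: for each fixed $\beta_0$ and all $\beta\geq\beta_0$ we have $y_{\beta_0}^{\,p}\, TP_{(x-y)^+}e \leq y_\beta^{\,p}\, TP_{(x-y_\beta)^+}e \leq Tx^{\,p}$, using $(x-y)^+\leq (x-y_\beta)^+$ and monotonicity of $P_{(\cdot)}$, $T$, and multiplication by the positive constant $y_{\beta_0}^{\,p}$ — wait, $y_{\beta_0}^{\,p}\leq y_\beta^{\,p}$ goes the right way. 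Then taking $\sup$ over $\beta_0$ and using $y_{\beta_0}^{\,p}\uparrow y^{\,p}$ together with left order continuity of multiplication against the fixed positive element $TP_{(x-y)^+}e$ gives $y^{\,p}\, TP_{(x-y)^+}e\leq Tx^{\,p}$, which is the claim. The main obstacle, as flagged, is organizing the double limit so that every passage to the supremum is against a fixed positive multiplier or through a map known to be left order continuous; the resolution is to always discard one variable's motion by a monotonicity bound before taking the limit in the other, rather than attempting a simultaneous limit.
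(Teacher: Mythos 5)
Your proof is correct and follows essentially the same route as the paper: apply the classical Chebyshev inequality of \cite[Theorem 3.9]{AT} to approximants from below and pass to the supremum using $P_{\left(  u-y\right)  ^{+}}e\uparrow P_{\left(  x-y\right)  ^{+}}e$ together with left order continuity. The paper's version is shorter because it never moves $y$: it takes $u\in\left[  x\right]  ^{\leq}$, observes $y^{p}TP_{\left(  u-y\right)  ^{+}}e\leq Tu^{p}\leq Tx^{p}$ directly, and passes to the limit in the single variable $u$ against the fixed multiplier $y^{p}$, so the double-net bookkeeping (and the monotonicity trick you use for the $\beta$-limit), while sound, is not needed.
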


\begin{proof}
According to \cite[Theorem 3.9]{AT} we have $y^{p}TP_{(u-y)^{+}}e\leq
Tu^{p}\leq Tx^{p}$ for all $u\in\left[  x\right]  ^{\leq}.$ The result follows
because $P_{(u-y)^{+}}e\uparrow P_{(x-y)^{+}}e$ as $u\uparrow x.$
\end{proof}

A Riemann integral on vector lattices was introduced in \cite{AR}, it is a
faithful generalization of the classical theory. The following result has been
proved in \cite{AR}.

\begin{lemma}
\label{R1}Let $p\in\left(  0,\infty\right)  $ and $a,\varepsilon\in\left(
0,\infty\right)  $ with $\varepsilon<a$. Then
\[
x^{p}-\varepsilon^{p}e=\int_{\varepsilon}^{a}pt^{p-1}P_{(x-te)^{+}%
}edt\text{\quad for all }x\in X\text{ with }\varepsilon e<x\leq ae.
\]

\end{lemma}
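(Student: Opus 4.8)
The idea is to verify the identity first for the Freudenthal components of $x$ and then to pass to the general case by approximating $x$ \emph{from below}. Since $0\le x\le ae$, everything takes place in the principal band generated by $e$, so there is no loss in assuming that $e$ is a weak order unit of $X$. For a partition $\varepsilon=t_{0}<t_{1}<\dots<t_{n}=a$ set $Q_{i}=P_{(x-t_{i-1}e)^{+}}-P_{(x-t_{i}e)^{+}}$ for $1\le i\le n$; these are pairwise disjoint band projections, and they telescope to $\sum_{i}Q_{i}=P_{(x-\varepsilon e)^{+}}-P_{(x-ae)^{+}}=I$, because $x\le ae$ forces $P_{(x-ae)^{+}}=0$ while $\varepsilon e<x$ (i.e.\ $x-\varepsilon e$ is a weak order unit) forces $P_{(x-\varepsilon e)^{+}}=I$. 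The element $x_{n}:=\sum_{i=1}^{n}t_{i-1}Q_{i}$ then satisfies $0\le x-x_{n}\le(\max_{i}(t_{i}-t_{i-1}))\,e$, and choosing a nested sequence of such partitions with mesh tending to $0$ yields $x_{n}\uparrow x$ with $0\le x-x_{n}\le\eta_{n}e$ and $\eta_{n}\downarrow 0$.

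For a step function $x_{n}=\sum_{i}t_{i-1}Q_{i}$ with all levels in $[\varepsilon,a)$ the identity is elementary. For $t\in[\varepsilon,a]$ one has $(x_{n}-te)^{+}=\sum_{i}(t_{i-1}-t)^{+}Q_{i}e$, hence $P_{(x_{n}-te)^{+}}e=\sum_{i:\,t_{i-1}>t}Q_{i}e$; integrating term by term (linearity of the Riemann integral of \cite{AR}, each summand being the fixed element $Q_{i}e\in X$ times the scalar Riemann-integrable function $pt^{p-1}\mathbf 1_{\{t<t_{i-1}\}}$) gives $\int_{\varepsilon}^{a}pt^{p-1}P_{(x_{n}-te)^{+}}e\,dt=\sum_{i}Q_{i}e\int_{\varepsilon}^{t_{i-1}}pt^{p-1}\,dt=\sum_{i}(t_{i-1}^{p}-\varepsilon^{p})Q_{i}e=x_{n}^{p}-\varepsilon^{p}e$, the last step using $x_{n}^{p}=\sum_{i}t_{i-1}^{p}Q_{i}e$ and $\sum_{i}Q_{i}e=e$.

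It remains to let $n\to\infty$ in $x_{n}^{p}-\varepsilon^{p}e=\int_{\varepsilon}^{a}pt^{p-1}P_{(x_{n}-te)^{+}}e\,dt$. Since $t\mapsto t^{p}$ is uniformly continuous on $[0,a]$, the bound $0\le x-x_{n}\le\eta_{n}e$ gives $x_{n}^{p}\to x^{p}$ relatively uniformly in $e$. For the right-hand side, fix $t\in[\varepsilon,a]$: the map $y\mapsto(y-te)^{+}$ is order continuous, so $(x_{n}-te)^{+}\uparrow(x-te)^{+}$; an increasing net of positive elements generates the same band as its supremum, hence the band projections increase, $P_{(x_{n}-te)^{+}}\uparrow P_{(x-te)^{+}}$, and therefore $0\le g_{n}(t):=\bigl(P_{(x-te)^{+}}-P_{(x_{n}-te)^{+}}\bigr)e\downarrow 0$ with $0\le pt^{p-1}g_{n}(t)\le\bigl(\sup_{[\varepsilon,a]}pt^{p-1}\bigr)e$. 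By the convergence properties of the integral established in \cite{AR} (a decreasing sequence of integrable functions converging pointwise to $0$ and dominated by a fixed multiple of $e$ has integrals decreasing to $0$) we get $\int_{\varepsilon}^{a}pt^{p-1}g_{n}(t)\,dt\downarrow 0$, so $\int_{\varepsilon}^{a}pt^{p-1}P_{(x_{n}-te)^{+}}e\,dt\to\int_{\varepsilon}^{a}pt^{p-1}P_{(x-te)^{+}}e\,dt$; passing to the limit in the step-function identity gives the claim.

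The delicate point is the last paragraph: interchanging the order limit with the vector-valued integral, and in particular the fact that an \emph{increasing} net of positive elements generates the band generated by its supremum. The analogous statement for decreasing nets is false (for instance $\mathbf 1_{[1/n,1]}\downarrow\mathbf 1$ in $L^{\infty}[0,1]$, where the corresponding bands intersect only in $\{0\}$), which is exactly why the approximation must be carried out from below with $x_{n}\uparrow x$ rather than from above.
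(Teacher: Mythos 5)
The paper offers no proof of this lemma --- it is quoted verbatim from \cite{AR} --- so there is nothing internal to compare with; judged on its own terms, your Freudenthal-component strategy is the natural one, but the write-up contains two genuine flaws. The first is the claim that $\varepsilon e<x$ makes $x-\varepsilon e$ a weak order unit, hence $P_{(x-\varepsilon e)^{+}}=I$ and $\sum_{i}Q_{i}=I$. In standard vector-lattice notation $\varepsilon e<x$ means only $\varepsilon e\le x$ and $\varepsilon e\ne x$: take $X=\mathbb{R}^{2}$, $e=(1,1)$, $x=(\varepsilon,a)$; then $\varepsilon e<x\le ae$ but $P_{(x-\varepsilon e)^{+}}$ is the projection onto the second coordinate. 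On the band $B^{d}$ complementary to $B$, the band generated by $x-\varepsilon e$, your approximants $x_{n}$ vanish while the component of $x$ there is $\varepsilon$ times that of $e$, so both the estimate $0\le x-x_{n}\le\eta_{n}e$ and the step identity (which actually yields $x_{n}^{p}-\varepsilon^{p}P_{B}e$, not $x_{n}^{p}-\varepsilon^{p}e$) fail. The lemma survives because both sides of the asserted identity have zero component in $B^{d}$ (there $x^{p}=\varepsilon^{p}e$ and $(x-te)^{+}=0$ for $t\ge\varepsilon$), so the correct reduction is to project onto $B$ first, where $x-\varepsilon e$ genuinely is a weak unit; but this must be said, since as written the argument rests on a false statement.

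The second flaw is the limit interchange you yourself call delicate. The only convergence theorem for this Riemann integral proved in the paper is Lemma \ref{L11}, which requires the integrands to be \emph{increasing functions of} $t$; your integrand $t\mapsto pt^{p-1}P_{(x_{n}-te)^{+}}e$ is a product of a monotone scalar factor and a decreasing vector factor and is not monotone, and neither is $g_{n}$. The ``bounded convergence theorem'' you invoke (pointwise decrease to $0$ plus domination by a multiple of $e$ implies the integrals tend to $0$) is, already for the scalar Riemann integral, the nontrivial Arzel\`a--Osgood theorem, and you cannot simply assert its availability in the vector-valued setting of \cite{AR}. There is a quantitative repair that avoids it entirely: from $0\le x-x_{n}\le\eta_{n}e$ one gets $(x-te)^{+}\le\bigl(x_{n}-(t-\eta_{n})e\bigr)^{+}$, hence $0\le g_{n}(t)\le\bigl(P_{(x_{n}-(t-\eta_{n})e)^{+}}-P_{(x_{n}-te)^{+}}\bigr)e=\sum_{i:\,t-\eta_{n}<t_{i-1}\le t}Q_{i}e$, and integrating $pt^{p-1}$ against this explicitly over $[\varepsilon,a]$ gives at most $\bigl(\sup_{[\varepsilon,a]}pt^{p-1}\bigr)\eta_{n}\,e$, which tends to $0$ $e$-uniformly; this replaces the appeal to a dominated convergence principle by an elementary estimate on the step elements themselves. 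Finally, the closing ``counterexample'' is garbled: $\mathbf{1}_{[1/n,1]}$ \emph{increases} with $n$, so it illustrates nothing about decreasing nets; the intended point (that $u_{n}\downarrow u$ need not give $P_{u_{n}}\downarrow P_{u}$) is correct but needs an example such as $u_{n}=n^{-1}e\downarrow0$.
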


Now if $p\in\lbrack1,\infty),$ the same proof yields the following:

\begin{lemma}
\label{L10}Let $a\in\left(  0,\infty\right)  $ and $p\in\lbrack1,\infty).$
Then%
\[
x^{p}=\int_{0}^{a}pt^{p-1}P_{(x-te)^{+}}edt\text{\quad for all }x\in X\text{
with }0\leq x\leq ae.
\]

\end{lemma}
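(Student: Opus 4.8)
The plan is to deduce this from Lemma \ref{R1} by letting $\varepsilon\downarrow 0$, so the whole game is an interchange of a limit with a Riemann integral over the parameter $t$. First I would fix $x\in X$ with $0\le x\le ae$ and, to be safe, apply Lemma \ref{R1} not to $x$ itself but to $x+\varepsilon e$, which satisfies $\varepsilon e< x+\varepsilon e\le (a+\varepsilon)e$; this gives $(x+\varepsilon e)^p-\varepsilon^p e=\int_\varepsilon^{a+\varepsilon} pt^{p-1}P_{(x+\varepsilon e-te)^+}e\,dt$. As $\varepsilon\downarrow 0$ the left-hand side converges in order to $x^p$, because $t\mapsto t^p$ is order continuous on $X_s$ (it is the extension produced by Proposition \ref{Ext} of the order continuous map $t\mapsto t^p$ on $X_+$, valid since $p\ge 1$), and $\varepsilon^p e\downarrow 0$. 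So it remains to identify the order limit of the right-hand side with $\int_0^a pt^{p-1}P_{(x-te)^+}e\,dt$.

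For the right-hand side I would argue monotonically. As $\varepsilon$ decreases, the integrand $t\mapsto pt^{p-1}P_{(x+\varepsilon e-te)^+}e$ increases pointwise in $t$ (since $P_{(x+\varepsilon e-te)^+}e$ increases with $\varepsilon$, band projections being monotone in their defining element and $(x+\varepsilon e-te)^+$ increasing in $\varepsilon$), and the interval of integration $[\varepsilon,a+\varepsilon]$ "increases toward" $[0,a]$. The Riemann integral constructed in \cite{AR} is order continuous with respect to monotone limits of the integrand (this is part of what makes it a faithful generalization of the classical theory), so the integrals increase in order to $\int_0^a pt^{p-1}\bigl(\sup_{\varepsilon>0}P_{(x+\varepsilon e-te)^+}e\bigr)dt$. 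Finally $\sup_{\varepsilon>0}P_{(x+\varepsilon e-te)^+}e=P_{(x-te)^+}e$ for every $t>0$: indeed $(x+\varepsilon e-te)^+\downarrow (x-te)^+$ as $\varepsilon\downarrow 0$ is false in the wrong direction, so more carefully one notes $(x+\varepsilon e-te)^+\uparrow$ as $\varepsilon\uparrow$, hence $\downarrow$ as $\varepsilon\downarrow$, with infimum $(x-te)^+$, and band projections are order continuous in the parameter; combined with the earlier remark that $x_\alpha\uparrow x$ in $X_s$ gives $P_{x_\alpha}\uparrow P_x$, and its evident downward analogue, this yields the claim.

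The main obstacle I anticipate is the careful bookkeeping of the limit interchange: one must be sure that the Riemann integral of \cite{AR} genuinely commutes with the relevant monotone order limits of both the integrand and the endpoints, and that the near-endpoint behaviour as $t\downarrow 0$ causes no trouble (the factor $t^{p-1}$ is harmless precisely because $p\ge1$, which is exactly why the lemma restricts to that range, whereas Lemma \ref{R1} needed the cutoff $\varepsilon$). If \cite{AR} already records the requisite monotone-convergence property of this integral, the argument is essentially a two-line corollary; if not, one reproduces the short Riemann-sum argument underlying Lemma \ref{R1} directly with lower limit $0$, using $0\le x\le ae$ to bound the sums and the order continuity of $p\mapsto P_{(x-te)^+}e$ in $t$ to pass to the limit. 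Either way no essentially new idea beyond Lemma \ref{R1} is needed.
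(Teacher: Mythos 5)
The reduction to Lemma \ref{R1} via $\varepsilon\downarrow0$ is a reasonable idea, but the step where you identify the limit of the integrand is broken. You need $P_{(x+\varepsilon e-te)^{+}}e$ to converge to $P_{(x-te)^{+}}e$ as $\varepsilon\downarrow0$ (and note the direction: the integrand \emph{decreases} as $\varepsilon$ decreases, contrary to your opening claim that it increases), and you justify this by the ``evident downward analogue'' of $x_{\alpha}\uparrow x\Rightarrow P_{x_{\alpha}}\uparrow P_{x}$. That downward analogue is false: for $u>0$ one has $u/n\downarrow0$ while $P_{u/n}=P_{u}$ for every $n$, so $\inf_{n}P_{u/n}e=P_{u}e\neq0=P_{0}e$. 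In the present situation the failure occurs at every $t$ where $(x-te)^{+}=0$ but $(x-(t-\varepsilon)e)^{+}>0$ for all $\varepsilon>0$; already for $X=\mathbb{R}$, $e=1$, $x=1$, $t=1$ the limit of $P_{(x+\varepsilon e-te)^{+}}e$ is $e$, not $0=P_{(x-te)^{+}}e$. So the pointwise limit of your integrands is not the integrand of the statement, and the limit interchange as you set it up does not close (one would have to argue separately that the two decreasing functions, differing only at jump points, have the same Riemann integral).

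For comparison, the paper offers no new argument here: it says the identity follows by ``the same proof'' as Lemma \ref{R1}, i.e.\ by rerunning the Riemann-sum computation of \cite{AR} with lower endpoint $0$, which is admissible precisely because $p\geq1$ keeps $t^{p-1}$ bounded near $0$ --- exactly the fallback you mention in your last sentence, and that is the intended proof. If you do want to salvage your limiting argument, the clean route is to note $(x+\varepsilon e-te)^{+}=\bigl(x-(t-\varepsilon)e\bigr)^{+}$ and substitute $s=t-\varepsilon$, so the right-hand side becomes $\int_{0}^{a}p(s+\varepsilon)^{p-1}P_{(x-se)^{+}}e\,ds$ over the fixed interval $[0,a]$; since $0\leq P_{(x-se)^{+}}e\leq e$, its difference from $\int_{0}^{a}ps^{p-1}P_{(x-se)^{+}}e\,ds$ is dominated by $\bigl((a+\varepsilon)^{p}-\varepsilon^{p}-a^{p}\bigr)e\downarrow0$. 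That version needs no continuity of $u\mapsto P_{u}$ at all and only elementary scalar estimates.
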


We need also an additional lemma.

\begin{lemma}
\label{L11}Let $X$ be an order complete vector lattice and $\left(  f_{\alpha
}\right)  $ a net of increasing maps from $\left[  a,b\right]  $ to $X.$ If
$f_{\alpha}\uparrow f$ then $\int_{a}^{b}f_{\alpha}\left(  t\right)
dt\uparrow\int_{a}^{b}f\left(  t\right)  dt.$
\end{lemma}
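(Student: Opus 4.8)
The plan is to reduce the statement to a comparison of lower Darboux sums. First note that, being a pointwise supremum of increasing maps, $f$ is itself increasing, hence Riemann integrable over $[a,b]$ in the sense of \cite{AR}; moreover, since $f_{\alpha}\leq f_{\beta}$ pointwise whenever $\alpha\leq\beta$, monotonicity of the integral shows that $\left(  \int_{a}^{b}f_{\alpha}\left(  t\right)  dt\right)  _{\alpha}$ is an increasing net with $\int_{a}^{b}f_{\alpha}\left(  t\right)  dt\leq\int_{a}^{b}f\left(  t\right)  dt$ for every $\alpha.$ Consequently $\sup_{\alpha}\int_{a}^{b}f_{\alpha}\left(  t\right)  dt\leq\int_{a}^{b}f\left(  t\right)  dt,$ and the whole problem is to prove the reverse inequality $\int_{a}^{b}f\left(  t\right)  dt\leq\sup_{\alpha}\int_{a}^{b}f_{\alpha}\left(  t\right)  dt$; once this is established, the net increases to its supremum, which is then $\int_{a}^{b}f\left(  t\right)  dt,$ as required.

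For the reverse inequality I would work with Darboux sums. For a partition $P:a=t_{0}<t_{1}<\cdots<t_{n}=b$ and an increasing map $g:[a,b]\longrightarrow X,$ write $L\left(  g,P\right)  =\sum_{i=1}^{n}g\left(  t_{i-1}\right)  \left(  t_{i}-t_{i-1}\right)  $ for the lower sum. Since for increasing $g$ one has $0\leq\sum_{i=1}^{n}\left(  g\left(  t_{i}\right)  -g\left(  t_{i-1}\right)  \right)  \left(  t_{i}-t_{i-1}\right)  \leq\operatorname{mesh}\left(  P\right)  \left(  g\left(  b\right)  -g\left(  a\right)  \right)  ,$ refining partitions makes the gap between lower and upper sums order-converge to $0,$ and hence $\int_{a}^{b}g\left(  t\right)  dt=\sup_{P}L\left(  g,P\right)  ,$ the supremum being taken over all partitions $P$ of $[a,b].$ I would quote this description of the integral from \cite{AR}.

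Now fix a partition $P$ as above. Using $f\left(  t_{i-1}\right)  =\sup_{\alpha}f_{\alpha}\left(  t_{i-1}\right)  $ and the fact that $\left(  f_{\alpha}\right)  $ is an increasing net, so that the finitely many suprema occurring in $L\left(  f,P\right)  $ may be taken simultaneously (addition being order continuous along directed sets), we obtain
\begin{align*}
L\left(  f,P\right)   &  =\sum_{i=1}^{n}\Bigl(\sup_{\alpha}f_{\alpha}\left(  t_{i-1}\right)  \Bigr)\left(  t_{i}-t_{i-1}\right)  =\sup_{\alpha}\sum_{i=1}^{n}f_{\alpha}\left(  t_{i-1}\right)  \left(  t_{i}-t_{i-1}\right)  \\
&  =\sup_{\alpha}L\left(  f_{\alpha},P\right)  \leq\sup_{\alpha}\int_{a}^{b}f_{\alpha}\left(  t\right)  dt.
\end{align*}
Taking the supremum over all partitions $P$ yields $\int_{a}^{b}f\left(  t\right)  dt=\sup_{P}L\left(  f,P\right)  \leq\sup_{\alpha}\int_{a}^{b}f_{\alpha}\left(  t\right)  dt,$ which finishes the argument.

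I expect the only delicate points to be bookkeeping: confirming that the integral of \cite{AR} is genuinely the supremum of lower Darboux sums for increasing integrands (an immediate consequence of the mesh estimate above), and the interchange of a finite sum with the supremum of an increasing net, i.e. $\sum_{i}\sup_{\alpha}u_{i,\alpha}=\sup_{\alpha}\sum_{i}u_{i,\alpha}$ for increasing nets $\left(  u_{i,\alpha}\right)  _{\alpha},$ which follows by induction on $n$ from order continuity of addition for directed families. Neither step presents a real obstacle.
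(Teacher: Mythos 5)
Your proof is correct and takes essentially the same route as the paper: both identify the integral of an increasing map with the supremum of its lower Darboux sums and then interchange the finite sum with the supremum over $\alpha$ of the increasing net. The only cosmetic difference is that the paper runs the argument along a fixed refining sequence of partitions with mesh tending to zero, whereas you take the supremum over all partitions.
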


\begin{proof}
The map $f$ is increasing and it follows from \cite[Corollary 3]{AR} that $f$
and $f_{\alpha}$ are Riemann integrable for every $\alpha$. Moreover, consider
a sequence $\left(  \sigma_{p}\right)  $ of partitions of $\left[  a,b\right]
$ such that $\sigma_{p+1}$ is a refinement of $\sigma_{p},$ with $\sigma
_{p}=\left(  t_{p,0},....,t_{p,n_{p}}\right)  $ and assume that $\left\vert
\sigma_{p}\right\vert \longrightarrow0$ as $p\longrightarrow\infty,$ where
$\left\vert \sigma_{p}\right\vert $ denotes the mesh of the partition
$\sigma_{p}.$ Then%
\begin{align*}
\sup\limits_{\alpha}\int_{a}^{b}f_{\alpha}\left(  t\right)  dt  &
=\sup\limits_{\alpha}\sup\limits_{p}%
%TCIMACRO{\tsum \limits_{i}}%
%BeginExpansion
{\textstyle\sum\limits_{i}}
%EndExpansion
f_{\alpha}\left(  t_{p,i-1}\right)  \left(  t_{p,i}-t_{p,i-1}\right) \\
&  =\sup\limits_{p}\sup\limits_{\alpha}%
%TCIMACRO{\tsum \limits_{i}}%
%BeginExpansion
{\textstyle\sum\limits_{i}}
%EndExpansion
f_{\alpha}\left(  t_{p,i-1}\right)  \left(  t_{p,i}-t_{p,i-1}\right) \\
&  =\sup\limits_{p}L\left(  f,\sigma_{p}\right)  =\int_{a}^{b}f\left(
t\right)  dt.
\end{align*}
as required.
\end{proof}

If $f:[a,\infty)\longrightarrow X_{+}$ is Riemann integrable on every
subinterval $\left[  a,b\right]  ,$ $a\leq b<\infty,$ we define the
generalized Riemann integral as follows:%
\[
\int_{a}^{\infty}f\left(  t\right)  dt=\sup\limits_{a\leq b<\infty}\int
_{a}^{b}f\left(  t\right)  dt\in X_{s}.
\]

\begin{lemma}
\label{R2}Let $X$ be an order complete vector lattice with weak unit $e$ and
$x$ a positive vector in $X_{s}.$ Then%
\[
x=\int_{0}^{\infty}P_{\left(  x-te\right)  ^{+}}edt.
\]

\end{lemma}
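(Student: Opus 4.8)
The plan is to reduce the claim about the general positive element $x\in X_{s}$ to the already-established identity for elements of $X$ dominated by a multiple of $e$, namely Lemma \ref{L10} with $p=1$, which gives $u = \int_{0}^{a} P_{(u-te)^{+}}e\,dt$ for $0\le u\le ae$, and then pass to the supremum using the order-continuity properties assembled above. Concretely, for each pair of positive integers $k,n$ consider $u_{k,n} = (x\wedge ke)\wedge ne = x\wedge ke$ once $n\ge k$; more cleanly, set $u_{k} = x\wedge ke \in X$ (this lies in $X$ by P4 applied to $ke\in X$ after truncating, or directly since $u_k\le ke$), and note $0\le u_{k}\le ke$. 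Applying Lemma \ref{L10} with $a=k$, $p=1$ gives
\[
x\wedge ke = \int_{0}^{k} P_{(u_{k}-te)^{+}}e\,dt.
\]

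Next I would let $k\to\infty$ on both sides. On the left, P3 (the weak-order-unit clause) gives $x\wedge ke \uparrow x$. On the right I need two facts: first, that $(u_{k}-te)^{+}\uparrow (x-te)^{+}$ in $X_s$ for each fixed $t$, hence $P_{(u_{k}-te)^{+}}e \uparrow P_{(x-te)^{+}}e$ by the remark following the lemma identifying $\pi_a$ with a band projection (the observation that $x_\alpha\uparrow x$ in $X_s$ implies $P_{x_\alpha}\uparrow P_x$); second, that the integrands are increasing in $t$ so each integral is well defined and Lemma \ref{L11} applies to push the supremum through $\int_{0}^{k}$. Combining, $\int_{0}^{k} P_{(u_k - te)^+}e\,dt \uparrow \int_{0}^{\infty} P_{(x-te)^+}e\,dt$ by definition of the generalized integral together with Lemma \ref{L11}, and the left sides converge up to $x$, so the two limits coincide.

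The step requiring the most care is interchanging the limit in $k$ with the Riemann integral, because $k$ appears simultaneously in the integrand (through $u_k$) and in the upper limit of integration. I would handle this by first replacing the upper limit $k$ by $\infty$ harmlessly—for $t\ge k$ one has $u_k = x\wedge ke \le ke \le te$, so $(u_k - te)^+ = 0$ and the integrand vanishes, whence $\int_0^k P_{(u_k-te)^+}e\,dt = \int_0^\infty P_{(u_k-te)^+}e\,dt$—and then apply Lemma \ref{L11} to the fixed interval-exhaustion with the increasing net $f_k(t) = P_{(u_k-te)^+}e$, noting $f_k \uparrow f$ where $f(t) = P_{(x-te)^+}e$. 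One should also check the monotonicity of $f_k$ in $t$ (clear, since $t\mapsto (u_k - te)^+$ is decreasing and $P_{\cdot}e$ is increasing) so that the Riemann integrals in play exist, via \cite[Corollary 3]{AR} as in the proof of Lemma \ref{L11}. Everything else is bookkeeping with P3 and the order-continuity of the extended band projections.
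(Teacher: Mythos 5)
Your proof is correct and follows essentially the same route as the paper's: truncate to $x\wedge ke\in X$, apply Lemma \ref{L10}, extend the upper limit to $\infty$ (the integrand vanishes for $t\ge k$), and then interchange the supremum over $k$ with the integral over each $[0,a]$ via Lemma \ref{L11} and the definition of the generalized integral. If anything, you justify the interchange (and the monotone convergence of the integrands $P_{(x\wedge ke-te)^{+}}e\uparrow P_{(x-te)^{+}}e$) more explicitly than the paper does.
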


\begin{proof}
Using Lemma \ref{L10} and Lemma \ref{L11} we have%
\begin{align*}
x  &  =\sup\limits_{k}x\wedge ke=\sup\limits_{k}\int_{0}^{\infty}P_{\left(
x\wedge ke-te\right)  ^{+}}edt\\
&  =\sup\limits_{k}\sup\limits_{a>0}\int_{0}^{a}P_{\left(  x\wedge
ke-te\right)  ^{+}}edt=\sup\limits_{a>0}\sup\limits_{k}\int_{0}^{a}P_{\left(
x\wedge ke-te\right)  ^{+}}edt\\
&  =\sup\limits_{a>0}\int_{0}^{a}P_{\left(  x-te\right)  ^{+}}edt=\int
_{0}^{\infty}P_{\left(  x-te\right)  ^{+}}edt.
\end{align*}

\end{proof}

\begin{lemma}
Let $X$ be an order complete vector lattice with weak order unit $e.$ If $x\in
X_{s}$ then $x\geq tP_{\left(  x-te\right)  ^{+}}e.$
\end{lemma}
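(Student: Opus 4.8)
The plan is to read this off the integral representation established in Lemma~\ref{R2}. Assume $x\geq 0$ (this is the case at issue, exactly as in Lemma~\ref{R2}), and fix $t\geq 0$; for $t=0$ there is nothing to prove. By Lemma~\ref{R2},
\[
x=\int_{0}^{\infty}P_{(x-se)^{+}}e\,ds .
\]
The heart of the matter is that the integrand is a \emph{decreasing} function of $s$. Indeed, if $0\leq s\leq t$ then $x-se\geq x-te$ in $X_{s}$, hence $(x-se)^{+}\geq (x-te)^{+}\geq 0$, and since the map $a\mapsto P_{a}$ is increasing on $X_{s}^{+}$ — which is immediate from the defining formula $P_{a}y=\pi_{a}(y)=\sup_{\alpha>0}(y\wedge\alpha a)$ — we obtain $P_{(x-se)^{+}}e\geq P_{(x-te)^{+}}e$ for all $s\in[0,t]$.

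With this observation the rest is a short computation. The map $s\mapsto P_{(x-se)^{+}}e$ is decreasing on $[0,t]$, hence Riemann integrable there by \cite[Corollary 3]{AR}; it is also positive, so shrinking the interval of integration from $[0,\infty)$ to $[0,t]$ can only decrease the integral, and comparing on $[0,t]$ with the constant map $P_{(x-te)^{+}}e$ gives
\[
x=\int_{0}^{\infty}P_{(x-se)^{+}}e\,ds\ \geq\ \int_{0}^{t}P_{(x-se)^{+}}e\,ds\ \geq\ \int_{0}^{t}P_{(x-te)^{+}}e\,ds\ =\ t\,P_{(x-te)^{+}}e ,
\]
which is precisely the asserted inequality. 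Here the first inequality uses only that $\int_{0}^{\infty}(\cdot)=\sup_{0\leq b<\infty}\int_{0}^{b}(\cdot)$ together with positivity of the integrand, and the last equality is the trivial value of the integral of a constant.

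There is no genuine obstacle; the only points deserving a line of justification are that $a\mapsto P_{a}$ is order‑monotone on $X_{s}^{+}$ and that the comparison and interval‑restriction properties of the (generalized) Riemann integral invoked above are legitimate — both follow from the definitions and from \cite{AR}. If one prefers to bypass the integral entirely, an equivalent route is: first verify the inequality inside $X$, where for $0\leq y\in X$ one has $y\geq P_{(y-te)^{+}}y\geq tP_{(y-te)^{+}}e$ because on the band generated by $(y-te)^{+}$ the element $y-te$ is positive (its negative part being disjoint from $(y-te)^{+}$); then pass to $0\leq x\in X_{s}$ via $y_{k}=x\wedge ke\uparrow x$, using that $(y_{k}-te)^{+}=(x-te)^{+}\wedge(k-t)e\uparrow(x-te)^{+}$ and hence $P_{(y_{k}-te)^{+}}e\uparrow P_{(x-te)^{+}}e$, and taking suprema in $x=\sup_{k}y_{k}\geq\sup_{k}tP_{(y_{k}-te)^{+}}e$.
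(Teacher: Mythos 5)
Your main argument is correct, but it goes through a different door than the paper. The paper's proof is the one you sketch only as an afterthought at the end: pick a net $x_{\alpha}\uparrow x$ with $x_{\alpha}\in X_{+}$, use the elementary fact that $y\geq tP_{(y-te)^{+}}e$ for $0\leq y\in X$ (which the paper takes for granted and you actually justify via $P_{(y-te)^{+}}(y-te)=(y-te)^{+}\geq 0$), and then interchange the suprema over $\alpha$ and over $k$ in $P_{(x_{\alpha}-te)^{+}}e=\sup_{k}k(x_{\alpha}-te)^{+}\wedge e$ to conclude $\sup_{\alpha}P_{(x_{\alpha}-te)^{+}}e=P_{(x-te)^{+}}e$. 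Your primary route instead reads the inequality off the integral representation $x=\int_{0}^{\infty}P_{(x-se)^{+}}e\,ds$ of Lemma \ref{R2}; this is legitimate (no circularity: Lemma \ref{R2} rests only on Lemmas \ref{L10} and \ref{L11}), and the key monotonicity $s\mapsto P_{(x-se)^{+}}e$ decreasing is correctly established. The one step you wave at — monotonicity and interval restriction for the vector-valued Riemann integral — is genuine but cheap: for a decreasing positive integrand every lower sum over a partition of $[0,t]$ already dominates $tP_{(x-te)^{+}}e$, so the supremum defining the integral does too. What each approach buys: yours is shorter given Lemma \ref{R2} and makes the inequality transparent as a "Chebyshev-type" tail estimate, at the cost of importing the Riemann integral machinery of \cite{AR}; the paper's is self-contained within the lattice structure of $X_{s}$ and needs only the associativity of iterated suprema, but leaves the base inequality in $X$ unproved. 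Both arguments (and the paper's) tacitly assume $x\geq 0$, which you at least flag explicitly.
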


\begin{proof}
Choose a net $\left(  x_{a}\right)  $ in $X_{+}$ such that $x_{\alpha}\uparrow
x.$ We have%
\begin{align*}
x  &  =\sup x_{\alpha}\geq\sup\limits_{\alpha}tP_{\left(  x_{\alpha
}-te\right)  ^{+}}e\\
&  =t\sup\limits_{\alpha}\sup\limits_{k}k\left(  x_{\alpha}-te\right)
^{+}\wedge e\\
&  =t\sup\limits_{k}\sup\limits_{\alpha}k\left(  x_{\alpha}-te\right)
^{+}\wedge e\\
&  =t\sup\limits_{k}k\left(  x-te\right)  ^{+}\wedge e=tP_{\left(
x-te\right)  ^{+}}e.
\end{align*}

\end{proof}

We are in position now to state the main Theorem of this section, which
characterizes elements in $X_{s}\setminus X^{u}.$

\begin{theorem}
\label{TA}Let $X$ be an order complete vector lattice and $0<e\in X$. Then the
following statements are equivalent

\begin{enumerate}
\item $x\in X_{s}\setminus X^{u}$

\item $\inf\limits_{\lambda\in\left(  0,\infty\right)  }P_{\left(  x-\lambda
e\right)  ^{+}}e>0.$
\end{enumerate}
\end{theorem}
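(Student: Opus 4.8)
The plan is to work in $X_s$ with the weak-order-unit $e$ fixed and use the integral representation $x=\int_0^\infty P_{(x-te)^+}e\,dt$ from Lemma \ref{R2}, together with the monotonicity of $t\mapsto P_{(x-te)^+}e$. Set $q=\inf_{\lambda>0}P_{(x-\lambda e)^+}e$; since the family $\{P_{(x-te)^+}e : t>0\}$ is decreasing, $q$ is well-defined in $X$ (it is bounded below by $0$) and is a component of $e$, hence $q=P e$ for some band projection $P$, namely the infimum band projection. The key dichotomy to establish is: $x\in X^u$ if and only if $q=0$.

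For the direction $(2)\Rightarrow\neg(1)$, i.e. $q>0$ implies $x\notin X^u$: if $q=Pe>0$, I would argue that on the band $B=P(X)$ the element $x$ dominates $\lambda e$ for every $\lambda>0$ in the sense that $P_{(x-\lambda e)^+}e \geq q$, so $Px \geq \sup_\lambda \lambda\, q$ — but wait, one must be careful, since the right object is $Px \geq \lambda Pe$ does not immediately follow from $P_{(x-\lambda e)^+}e\geq q$. The cleaner route: from the previous Lemma, $x\geq tP_{(x-te)^+}e\geq tq$ for every $t>0$, so $x\geq\sup_{t>0}tq$. If $x$ were in $X^u$, then $\sup_{t>0}tq$ would be dominated by an element of $X^u$; but in a universally complete (hence Archimedean) lattice, $\sup_{t>0}tq$ exists only if $q=0$, because $tq\uparrow$ unboundedly whenever $q>0$. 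Since $X^u$ is Archimedean and $x\in X^u$, the set $\{tq:t>0\}$ is order bounded in $X^u$, forcing $q=0$, a contradiction. So $q>0\Rightarrow x\notin X^u$, which is exactly $(2)\Rightarrow(1)$ in contrapositive form — actually it gives $(2)\Rightarrow x\notin X^u$, and combined with $x\in X_s$ (given), this is $(1)$.

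For the direction $(1)\Rightarrow(2)$, equivalently $q=0\Rightarrow x\in X^u$: assuming $q=\inf_\lambda P_{(x-\lambda e)^+}e=0$, I want to show $x\in X^u$, i.e. $x$ is a genuine element of the universal completion rather than a "point at infinity". The idea is that the integral $\int_0^\infty P_{(x-te)^+}e\,dt$ should converge in $X^u$ when the integrand tends to $0$. More precisely, decompose $x = \int_0^1 P_{(x-te)^+}e\,dt + \int_1^\infty P_{(x-te)^+}e\,dt$; the first piece is bounded by $e\in X$, hence lies in $X$. For the second, using $q=0$ I would show $P_{(x-te)^+}e\downarrow 0$ as $t\to\infty$ and that the tail integral is the supremum of its truncations, which lands in $X^u$ because the disjointification of the band projections $P_{(x-te)^+}$ across dyadic levels gives a representation of $x$ as a sum/supremum of mutually-disjoint scaled components $\sum_n 2^n(P_{(x-2^n e)^+}-P_{(x-2^{n+1}e)^+})e$-type terms, each in $X$, whose supremum exists in $X^u$ precisely because the "leftover" band $\bigwedge_n P_{(x-2^n e)^+}$ is zero. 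The principal obstacle is making this last step rigorous: one must verify that the only obstruction to $\int_0^\infty P_{(x-te)^+}e\,dt$ lying in $X^u$ (as opposed to $X_s\setminus X^u$) is the non-vanishing of the infimum projection, which amounts to showing that a positive element of $X_s$ is "finite" (in $X^u$) exactly when it is not "infinitely large on any band", and this is where I expect to lean hardest on properties P2, P4, and P6 of the sup-completion together with the structure of $X^u$ as the universal completion. I would phrase the finiteness criterion as: $x\in X^u$ iff for the band projection $P$ onto the carrier of $q$, we have $Px\in X^u$ and $P^dx\in X^u$, reducing to $q=0$ meaning $P=0$.
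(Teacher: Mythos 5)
Your direction $(2)\Rightarrow(1)$ is correct and complete: by the lemma preceding the theorem, $x\geq tP_{(x-te)^{+}}e\geq tq$ for every $t>0$, and if $x$ belonged to the Archimedean vector lattice $X^{u}$ this would force $q=0$. (The paper treats this implication as immediate and does not write it out.) The real content of the theorem is the converse, $q=0\Rightarrow x\in X^{u}$, and there your argument has a genuine gap which you yourself flag: the assertion that the tail of $\int_{0}^{\infty}P_{(x-te)^{+}}e\,dt$ ``lands in $X^{u}$ precisely because the leftover band is zero'' is a restatement of the claim to be proved, and your closing ``finiteness criterion'' ($x\in X^{u}$ iff $Px$ and $P^{d}x$ lie in $X^{u}$, with $P$ the carrier of $q$) is circular as formulated. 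The paper closes this direction by an entirely different device: it observes that $t\longmapsto e-P_{(x-te)^{+}}e$ satisfies the hypotheses of the realization theorem for distribution functions (Theorem 3.5 of the paper cited as [ARd]) applied in the ideal $I_{e}$, obtains $y\in I_{e}^{u}\subset X^{u}$ with the same distribution function, and concludes $y=\int_{0}^{\infty}P_{(y-te)^{+}}e\,dt=\int_{0}^{\infty}P_{(x-te)^{+}}e\,dt=x$ by Lemma \ref{R2}.

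Your dyadic route can in fact be completed, and would give a more self-contained proof, but the missing verifications are exactly the ones you postponed. Put $Q_{n}=P_{(x-2^{n}e)^{+}}$, a decreasing sequence of band projections. First, $(I-Q_{n})x\leq 2^{n}(I-Q_{n})e\leq 2^{n}e$ (check this on each $y\in[x]^{\leq}\cap X_{+}$), so $(I-Q_{n})x\in X$ by property P4; hence the mutually disjoint pieces $(Q_{n}-Q_{n+1})x$ all lie in $X$ and their supremum exists in $X^{u}$ by lateral completeness. Second, $q=0$ gives $Q_{n}\downarrow 0$ and therefore $(I-Q_{n})x\uparrow x$ in $X_{s}$ (interchange the suprema in $(I-Q_{n})x=\sup\{(I-Q_{n})y:y\in[x]^{\leq}\cap X_{+}\}$); note that this step, like the paper's proof, silently requires $e$ to be a weak order unit, since $\inf_{n}Q_{n}e=0$ controls nothing outside the band generated by $e$ (for $X=\mathbb{R}^{2}$, $e=(1,0)$ and $x=(0,\infty)$ one has $q=0$ but $x\notin X^{u}$). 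Third, one must identify the supremum of the disjoint family computed in $X^{u}$ with the one computed in $X_{s}$, which is Proposition \ref{XSU}. With those three points supplied, $x=(I-Q_{0})x+\sup_{N}(Q_{0}-Q_{N})x\in X^{u}$ and your argument goes through; as written, however, the decisive direction is only a plan.
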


\begin{proof}
We need only to prove that 1) implies 2). Assume then that $P_{\left(
x-\lambda e\right)  ^{+}}e\downarrow0$ as $\lambda\uparrow\infty.$ It follows
that the map $t\longmapsto q\left(  t\right)  =e-P_{\left(  x-\lambda
e\right)  ^{+}}e$ satisfies the conditions of Theorem 3.5 in \cite{ARd}
applied in the order complete vector lattice $I_{e},$ the ideal generated by
$e$. Then there is $y\in I_{e}^{u}\subset X^{u}$ such that $q\left(  t\right)
=e-P_{\left(  y-te\right)  ^{+}}e$ (here $T$ is the identity map). It follows
from Lemma \ref{R2} that
\[
y=\int_{0}^{\infty}P_{\left(  y-te\right)  ^{+}}edt=\int_{0}^{\infty
}P_{\left(  x-te\right)  ^{+}}edt=x,
\]
and then $x\in X^{u}.$
\end{proof}

A very useful consequence of the Theorem \ref{TA} is the following.

\begin{corollary}
\label{Car}Let $X$ be a vector lattice and $x\in X_{s}\setminus X^{u}.$ Then
there exists $a\in X$ such that $x\geq ta>0$ for all real $t>0.$
\end{corollary}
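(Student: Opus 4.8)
The plan is to deduce Corollary \ref{Car} directly from Theorem \ref{TA}. First I would dispense with the hypothesis that $X$ has a weak order unit: since $x \in X_s \setminus X^u$ is a fixed positive element, I would pass to the band generated by $x$, or more precisely fix any $0 < e_0 \in X$ with $e_0 \not\perp x$ (such $e_0$ exists because $x > 0$ and every nonzero positive element of $X_s$ dominates a nonzero element of $X$ by P2), and replace $e_0$ by $e := P_{e_0}x \wedge$-truncations, working inside the ideal $I_{e_0}$ generated by $e_0$ where $e_0$ is a weak order unit. One must check that $x$, viewed appropriately, still fails to lie in the universal completion of that ideal — this is where I expect the bookkeeping to be, since $X_s$ and $X^u$ must be intersected with the relevant band/ideal and one needs $P_{e_0}x \notin (I_{e_0})^u$; this follows because $X^u$ restricted to a band is the universal completion of that band, and if every band projection of $x$ landed in the corresponding universal completion then $x$ itself would be in $X^u$.

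Once we are in the setting of Theorem \ref{TA} with a weak order unit $e$, the theorem gives us that $c := \inf_{\lambda \in (0,\infty)} P_{(x-\lambda e)^+} e > 0$. The key observation is then monotonicity: for every real $t > 0$ we have $P_{(x-te)^+} e \geq c$, and by the preceding lemma in the excerpt (the one stating $x \geq t\, P_{(x-te)^+} e$), combined with the fact that $P_{(x-te)^+}e \geq c$, we obtain
\[
x \geq t\, P_{(x-te)^+} e \geq t c
\]
for all $t > 0$. Setting $a := c \in X$ (note $a \in X$ since $c = \inf$ of elements of $X$ in the order complete lattice $X$, so $c \in X$, and $c > 0$) finishes the argument: $x \geq ta > 0$ for all real $t > 0$, as required.

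The only subtlety worth flagging is the order of the two estimates: the lemma gives $x \geq t P_{(x-te)^+} e$ pointwise in $t$, while Theorem \ref{TA} gives the uniform lower bound $P_{(x-te)^+} e \geq c > 0$ valid simultaneously for all $t$; chaining them is immediate but it is essential that $c$ does not depend on $t$, which is exactly the content of statement (2) of the theorem. I would also remark that $a = c$ genuinely lies in $X$ and not merely in $X_s$: each $P_{(x-\lambda e)^+}e$ is an element of $X$ bounded above by $e$, so their infimum exists in the order complete lattice $X$ and belongs to $X$.

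The main obstacle, then, is purely the reduction to the weak-order-unit case: verifying that after restricting to $I_{e_0}$ the element $x$ still witnesses membership in $X_s \setminus X^u$ rather than collapsing into the universal completion of the smaller ideal. Everything after that reduction is a two-line combination of Theorem \ref{TA} with the lemma $x \geq t P_{(x-te)^+}e$.
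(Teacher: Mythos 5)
Your core argument is the same as the paper's: apply Theorem \ref{TA} to obtain a uniform lower bound $c=\inf_{t>0}P_{(x-te)^{+}}e>0$ and chain it with the lemma $x\geq tP_{(x-te)^{+}}e$ to get $x\geq tc$ for all $t>0$. But two steps deserve scrutiny. The lighter one is your claim that $a:=c$ lies in $X$ "since $X$ is order complete": the corollary is stated for an arbitrary vector lattice (and is later applied to Banach lattices that are not assumed order complete), so $c$ a priori lives only in $X^{\delta}$. The paper's fix is one line — take the infimum in $X^{\delta}$ and use order density of $X$ in $X^{\delta}$ to choose $a\in X_{+}$ with $0<a\leq c$; the chain $x\geq tP_{(x-te)^{+}}e\geq tc\geq ta$ survives unchanged. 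This is easily repaired, but as written your $a$ need not be in $X$.

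The heavier issue is the reduction to the weak-order-unit case, which you yourself flag as the main obstacle and then do not actually close. Your justification — that if every principal band projection of $x$ landed in the corresponding universal completion then $x$ itself would lie in $X^{u}$ — does not follow from $x=\sup_{e_{0}}P_{e_{0}}x$: a supremum taken in $X_{s}$ of elements of $X^{u}$ can perfectly well escape $X^{u}$ (that is the whole point of $X_{s}$; consider $\sup_{n}ne$). Moreover, it is genuinely not enough to pick $e_{0}$ with $e_{0}\not\perp x$; one needs $e_{0}$ in a band on which $x$ is actually "infinite", and your sketch does not produce such an $e_{0}$. For comparison, the paper performs no such reduction: Theorem \ref{TA} is stated for an arbitrary $0<e\in X$ and is invoked directly in $X^{\delta}$. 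Your instinct that some care is needed here is sound (the proof of Theorem \ref{TA} does pass to the ideal $I_{e}$, where $e$ is a weak unit), but the bridge you propose — "every band projection in $X^{u}$ implies $x\in X^{u}$" — is exactly the nontrivial content you would need to prove, and the argument you offer for it is invalid. As it stands, the proposal has an open gap at the step it depends on most.
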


\begin{proof}
It follows from Theorem \ref{TA} that $\inf P_{\left(  x-te\right)  }e>0$ in
$X^{\delta},$ the order completion of $X.$ Since $X$ is order dense in
$X^{\delta},$ there is $a\in X_{+}$ such that $P_{\left(  x-te\right)  }e>a>0$
for all $t>0.$ It follows therefore that $x\geq tP_{\left(  x-te\right)
}e\geq ta$ for all $t>0.$
\end{proof}

The construction of the cone $X_{s}$ is a little complicated and it seems
interesting and helpful to know more about its relationship with the space
$X^{u}.$ An interesting information is given below.

\begin{proposition}
\label{XSU}Let $X$ be an order complete vector lattice then the positive cone
of $X^{u}$ is contained in $X_{s}.$
\end{proposition}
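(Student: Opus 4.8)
The plan is to show that every $0 \le x \in X^{u}$ can be realized as the supremum in $X_s$ of its lower set $[x]^{\le} = \{y \in X : y \le x\}$, which is precisely the kind of element $X_s$ is built to contain (via P2). First I would fix a weak order unit $e \in X_+$; since $X$ is order dense in $X^u$ and $e$ remains a weak order unit there, and since $X^u$ is universally complete (hence laterally and Dedekind complete), the element $x \wedge ke \in X$ for every $k$ — indeed $x \wedge ke$ lies in the ideal generated by $e$ in $X^u$, and because $X$ is order dense and order complete, that ideal is contained in $X$ (this uses that $X$ is an order-dense ideal's worth of $X^u$ only in the band sense, so some care is needed here). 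Actually the cleaner route: $X^u = X_s$ would follow if I can embed $X^u_+$ into $X_s$ compatibly, so I would instead construct the map directly.

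Concretely, I would define $\Phi : X^u_+ \to X_s$ by $\Phi(x) = \sup\{ y \in X : 0 \le y \le x \}$, the supremum taken in the order complete cone $X_s$ (this sup exists since $X_s$ is order complete and the set is bounded above by P5's biggest element, or more simply bounded in $X_s$ because... one must check boundedness, but order completeness of $X_s$ in Donner's sense allows arbitrary suprema up to the top element). The key steps are: (i) $\Phi$ is additive — this follows from the fact, quoted in the excerpt, that $\sup(A+B) = \sup A + \sup B$ in $X_s$ for $A, B \subset X$, applied to $A = [x]^{\le}$, $B = [y]^{\le}$, once one notes $[x]^{\le} + [y]^{\le}$ is cofinal in $[x+y]^{\le}$ (using the Riesz Decomposition Property, Lemma~1, in $X^u$); (ii) $\Phi$ restricted to $X_+$ is the identity, by P1; (iii) $\Phi$ is injective and order-preserving, so $X^u_+$ is identified with its image inside $X_s$. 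For injectivity: if $\Phi(x) = \Phi(x')$ then $[x]^{\le}$ and $[x']^{\le}$ have the same supremum in $X_s$, hence by P6 also $x \wedge f = x' \wedge f$ for all $f \in X$ (computing $x\wedge f = \sup_{y \in [x]^{\le}} y \wedge f$ via P6's "in particular" clause, valid since $x \wedge f \in X^u$ and $X$ is order dense), and letting $f = ke \uparrow$ gives $x = x'$ in $X^u$.

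The main obstacle I anticipate is step (i), the additivity, and more precisely verifying that $[x]^{\le} + [y]^{\le}$ is cofinal in $[x+y]^{\le}$ — i.e., that every $z \in X$ with $0 \le z \le x+y$ can be written as $z \le y_1 + z_1$ with $y_1, z_1 \in X$, $y_1 \le x$, $z_1 \le y$. This is exactly the Riesz Decomposition Property applied inside the universally complete (hence Dedekind complete) lattice $X^u$, splitting $z = (z \wedge x) + (z - z\wedge x)$ with $z \wedge x \in X$ and $z - z \wedge x \in X$ and $z - z\wedge x \le y$ by Birkhoff-type reasoning; so it reduces to a standard lattice identity, but one must be careful that all intermediate elements genuinely lie in $X$ and not merely in $X^u$. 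A secondary subtlety is ensuring the relevant suprema exist in $X_s$: here I would invoke that $X_s$ is an \emph{order complete} cone in Donner's sense, so $\sup[x]^{\le}$ exists (possibly equal to the top element a priori, though additivity and the identification will show it is a genuine new element of the cone dominating all of $X$ below it). Once these points are settled, the inclusion $X^u_+ \subset X_s$ is immediate from the construction.
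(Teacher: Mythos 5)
Your construction is essentially correct but takes a genuinely different route from the paper. The paper fixes $x\in X^{u}_{+}$, a net $x_{\alpha}\uparrow x$ in $X$, sets $y=\sup x_{\alpha}$ in $X_{s}$, and shows $y=x$ in two strokes: first $y\notin X_{s}\setminus X^{u}$, since otherwise Corollary \ref{Car} yields $a>0$ with $y\geq na$ for all $n$, and property P6 gives $na=na\wedge y=\sup(x_{\alpha}\wedge na)=x\wedge na\leq x$, contradicting the Archimedean property of $X^{u}$; second, the inequality $y\geq x$ cannot be strict, because order density of $X$ in $X^{u}$ would produce $0<u\in X$ with $y-u\in X_{s}$ still an upper bound of $(x_{\alpha})$. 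So the paper leans on the characterization of $X_{s}\setminus X^{u}$ from Theorem \ref{TA} plus a minimality argument, whereas you build the map $\Phi(x)=\sup[x]^{\leq}$ directly and verify additivity, monotonicity and injectivity. Your route is more self-contained (it does not pass through Theorem \ref{TA}, whose statement already presupposes some identification of a part of $X_{s}$ with $X^{u}$), at the cost of the cofinality bookkeeping; the paper's is shorter but inherits whatever circularity lives in the phrase ``$y\in X^{u}$'' for $y\in X_{s}$.

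Two points in your argument must be nailed down. First, the step $z\wedge x\in X$ for $z\in X_{+}$ and $x\in X^{u}_{+}$ is exactly the assertion that $X$ is an ideal in $X^{u}$; you flag it but leave it open. It is true here: if $0\leq w\leq z$ with $z\in X$ and $w\in X^{u}$, then $w=\sup\{v\in X:0\leq v\leq w\}$ by order density, this set is bounded above by $z$, and since an order dense sublattice is regular, its supremum computed in the order complete lattice $X$ coincides with $w$, so $w\in X$. State and use this explicitly, since the whole cofinality of $[x]^{\leq}+[y]^{\leq}$ in $[x+y]^{\leq}$ rests on it. Second, in the injectivity argument you let $f=ke\uparrow$ for a weak order unit $e$, which $X$ need not possess; instead let $f$ range over all of $X_{+}$ and conclude $x=\sup_{f\in X_{+}}(x\wedge f)=\sup_{f\in X_{+}}(x'\wedge f)=x'$ by order density of $X$ in $X^{u}$. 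With these repairs the proof is complete; note that the identity $\Phi(x)\wedge f=x\wedge f$ you derive via P6 also gives order-reflection of $\Phi$ for free, which is really what ``contained in'' should mean here. The exploratory first paragraph of your proposal should simply be deleted: the $\Phi$ construction is the proof.
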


\begin{proof}
Fix a positive element $x\in X^{u}$ and choose an increasing net in $X$ such
that $x_{\alpha}\uparrow x.$ Let us define $y$ to be the supremum of the net
$\left(  x_{\alpha}\right)  $ in $X_{s}.$ We claim that $y=x.$ First, we show
that $y\in X^{u}.$ Otherwise, according to Corollary \ref{Car} there is an
element $a\in X$ such that $y\geq na>0$ for all integer $n.$ Now observe that
the following equality%
\[
\sup\left(  x_{\alpha}\wedge na\right)  =x\wedge na
\]
holds in $X^{u}$ and then in $X$ since $X$ is regular in $X^{u}.$ Moreover,
property P6) yields%
\[
na=na\wedge y=\sup\left(  x_{\alpha}\wedge na\right)  .
\]
Thus we get $x\geq na>0$ for all $n,$ which is impossible. So $y\in X^{u}$ as
claimed and then $y\geq x.$ Now if this inequality is strict then there is
$0<u\in X$ such that $y\geq x+u$ (because $X$ is order dense in $X^{u}$). It
follows that $y-u\geq x_{\alpha}$ for all $\alpha.$ But since $y-u\in X_{s}$
we get a again a contradiction, and the proof is finished.
\end{proof}

In the last part of this section we shall make some remarks about the natural
domain of a conditional expectation. Consider an order complete vector lattice
$X$ with weak unit $e.$ We recall that a conditional expectation operator is
an order continuous strictly positive projection on $X$, with its range
$R\left(  T\right)  $ is an order complete vector sublattice and with $Te=e.$
It was shown in \cite{L-24} that, for any conditional expectation $T$ on $X$,
there exists a largest vector sublattice of $X^{u}$ called the
\textit{natural} \textit{domain }of $T$, to which $T$ extends uniquely to a
conditional expectation. The domain of $T$ is denoted by $L^{1}\left(
T\right)  $ and defined, as in \cite{L-24}, in the following manner: Let $D$
be the set of elements $x$ in $X_{+}^{u}$ for which there is a net $\left(
x_{\alpha}\right)  $ in $X$ such that $x_{\alpha}\uparrow x$ and $\left(
Tx_{\alpha}\right)  $ is bounded in $X^{u}.$ Afterwards put $Tx=\sup
Tx_{\alpha}\in X^{u}$ for $x\in D$ and $L^{1}\left(  T\right)  =D-D.$ Of
course, it has been proved that $T$ is well defined on $D$ and can be extended
in a natural way to its domain. It is worth noting that the range of this
extended conditional expectation enjoys the nice property of being an
$f$-algebra \cite[Theorem 3.1]{AT}. This was also noticed by Grobler in
\cite{L-21} where he proved that if the conditional expectation $T$ with range
$\mathcal{F}$ is extended to its domain then its range is equal to
$\mathcal{F}^{u}$, which is actually an $f$-algebra. In this paper Grobler
suggested another approach to define the domain of $T$ using the
sup-completion $X_{s}$. Although it is not explicitly mentioned, it is
understood that this approach leads to the same definition introduced in
\cite{L-24}. The justifications given by Grobler for these facts are somehow
quick and there are some missing details (see \cite[Proposition 2.1]{L-21}).
We provide here an alternative proof. Following Grobler we say that an element
$0\leq x\in X_{s}$ is in the domain of $T$ if $Tx:=\sup Tx_{\alpha}\in X^{u}$
where $\left(  x_{\alpha}\right)  $ is any increasing net in $X$ such that
$x_{\alpha}\uparrow x.$ If $A$ denotes the set of such elements one can prove
that $A$ is contained in $X^{u}$ and then de set%
\[
\operatorname*{dom}T=A-A.
\]
It will be enough to prove that the set $A$ is in fact the set
$\operatorname*{dom}\left(  \tau\right)  $ defined in \cite{L-24} and denoted
above by $D.$ Assume by contradiction that $x\in A\setminus X^{u}.$ Then there
exists $u>0$ such that $x\geq nu$ for all $n\in\mathbb{N}$ and then $Tx\geq
nTu.$ So $Tu=0$ which contradicts the fact that $T$ is strictly positive. Now
inclusion $D\subset A$ follows from Proposition \ref{XSU}.

\section{Universal completion and uo-complete.}

We show in this section that a vector lattice $X$ is universally complete if
and only if it is uo-complete. This answers an open problem in \cite{L-174}
and offers another reason that the notion of unboundedly order convergence
deserves the particular attention that it recieved recently by many authors
(see \cite{L-65,L-196,L-197,L-174} and the refenrences cited there). Let first
recall some definitions. A net $\left(  x_{\alpha}\right)  _{\alpha\in A}$ in
a vector lattice $X$ is said to be \textit{order convergent} to $x$ (and we
write $x_{\alpha}\overset{o}{\longrightarrow}x$) if there exists a net
$\left(  y_{\beta}\right)  _{\beta\in B}$ such that $y_{\beta}\downarrow0$ and
for each $\beta\in B$ there exists $\alpha_{\beta}\in A$ satisfying
$\left\vert x_{\alpha}-x\right\vert \leq y_{\beta}$ for all $\alpha\geq
\alpha_{\beta}.$ A net $\left(  x_{\alpha}\right)  $ is said to
\textit{converge in unbounded order} to $x,$ and we write $x_{\alpha}%
\overset{uo}{\longrightarrow}x,$ if $\left\vert x_{\alpha}-x\right\vert \wedge
y\overset{o}{\longrightarrow}0$ for every $y\in X_{+}$ It should be noted that
$x_{\alpha}\overset{uo}{\longrightarrow}x$ if and only if $\left(  x_{\alpha
}\wedge b\right)  \vee a\overset{o}{\longrightarrow}\left(  x\wedge b\right)
\vee a$ for all $a\leq0\leq b.$ This can be reduced for positive nets to the
following%
\[
x_{\alpha}\wedge y\overset{o}{\longrightarrow}x\wedge y\text{ for all }y\in
X_{+}.
\]
A net $\left(  x_{\alpha}\right)  $ is said to be \textit{order Cauchy} if the
net $\left(  x_{\alpha}-x_{\beta}\right)  _{\left(  \alpha,\beta\right)  }$
converges in order to zero;.and it is \textit{uo-Cauchy }if the net $\left(
x_{\alpha}-x_{\beta}\right)  _{\left(  \alpha,\beta\right)  }$ uo-converges to zero.

\subsection{The main result}

In \cite{L-174} Li and Chen proved the following

\begin{proposition}
Let $X$ be an order complete vector lattice.

\begin{enumerate}
\item If $X$ is uo-complete then $X$ is universally complete.

\item If $X$ is universally complete and has the countable sup property then
$X$ is uo-complete.
\end{enumerate}
\end{proposition}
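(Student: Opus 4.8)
The statement to be proved is the Li--Chen proposition: for an order complete vector lattice $X$, (1) if $X$ is $uo$-complete then $X$ is universally complete, and (2) if $X$ is universally complete and has the countable sup property then $X$ is $uo$-complete. I will prove the two parts separately, using the principle that $uo$-convergence in $X$ agrees with $uo$-convergence computed in $X^u$, and that a net in $X^u$ is $uo$-convergent iff it is order convergent when tested against each positive element of a fixed order basis --- in particular, in a universally complete space a net $uo$-converges iff it is eventually order-bounded on each band of an order basis and order converges there.

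For part (1), the plan is to take an arbitrary positive increasing net $(x_\alpha)$ in $X$ that is order bounded in $X^u$ and show it has a supremum in $X$; universal completeness will then follow since every universally complete space is exactly an order complete space in which such nets converge. Actually the cleanest route is: let $(x_\alpha)$ be an arbitrary order-disjoint family of positive elements (or an arbitrary positive increasing net), form the candidate supremum $x$ in $X^u$, and show the tail net $(x - x_\alpha)$ or the partial-sum net is $uo$-Cauchy in $X$; indeed for disjoint families, partial sums are always $uo$-Cauchy. By $uo$-completeness the net $uo$-converges in $X$ to some $y \in X$, and then one checks $y$ is the supremum/sum, using that $uo$-limits of increasing nets are order limits hence suprema. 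This shows arbitrary disjoint families have suprema in $X$, which together with order completeness gives universal completeness. The main thing to be careful about is that $uo$-Cauchy-ness of the relevant net must be verified intrinsically in $X$, testing against each $y \in X_+$; for a disjoint family this is where the disjointness does the work, since $|x_F - x_G| \wedge y$ becomes small once $F, G$ exhaust the (countably many, by disjointness relative to $y$) indices that meet the band of $y$.

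For part (2), assume $X$ is universally complete with the countable sup property and let $(x_\alpha)$ be a $uo$-Cauchy net in $X$; I must produce a $uo$-limit. Since $X = X^u$, fix a maximal disjoint family $(e_i)_{i \in I}$ of positive elements (an order basis) and work band by band: for each $i$, the net $(x_\alpha \wedge n e_i) \vee (-n e_i)$, or more simply the projection of $x_\alpha$ onto the band generated by $e_i$ truncated at level $n$, is order Cauchy in the Dedekind complete (hence in the principal projection band) space, so it order converges; assembling these band-limits using that $X$ is universally complete gives a candidate $x \in X$. The countable sup property enters to guarantee that the relevant suprema and infima defining order convergence can be realized along sequences, so that the band-wise order limits patch together to an honest element and the convergence $|x_\alpha - x| \wedge y \to 0$ holds for each $y \in X_+$ --- the point being that $y$ lives in the band generated by countably many $e_i$, reducing the verification to a countable situation where sequential arguments apply. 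The hard part, and the reason the countable sup property is assumed here, is exactly this patching: without it one cannot in general pass from "order convergent on each band of an order basis" to "$uo$-convergent", because an arbitrary $y$ may interact with uncountably many basis bands and the needed supremum of the error terms need not be controllable. This obstruction is precisely what the main theorem of the paper (Theorem \ref{Main}) will have to overcome by a different argument, presumably routing through the sup-completion $X_s$ and Corollary \ref{Car} rather than through an order basis.
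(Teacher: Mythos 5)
Your part (1) is essentially the paper's own argument (it reappears verbatim as the lateral-completeness step in the proof of Theorem \ref{Main}): take a disjoint positive family, show the net of finite partial suprema $x_F$ is uo-Cauchy, and identify its uo-limit with the supremum. One justification you give is wrong, though harmless: it is not true that only countably many members of a disjoint family can meet the band generated by a fixed $y\in X_+$ (take the standard unit vectors and $y=\mathbf{1}$ in $\mathbb{R}^{I}$ with $I$ uncountable). The correct mechanism is simpler and is the one the paper uses: for each $y\in X_+$ the net $x_F\wedge y$ is increasing and order bounded by $y$, hence order convergent because $X$ is order complete; combined with disjointness, which gives $\left\vert x_F-x_{F'}\right\vert\wedge y\leq x_{F\cup F'}\wedge y-x_{F\cap F'}\wedge y$, this yields uo-Cauchyness.

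Part (2) has a genuine gap. After truncating on the band $B_i$ generated by $e_i$ you obtain order limits $\ell_{i,n}=\lim_\alpha\left(x_\alpha\wedge ne_i\right)$, but your ``candidate $x$'' requires $\sup_n\ell_{i,n}$ to exist in $B_i$, and that is precisely the point at issue: an increasing sequence in a universally complete space need not have a supremum (consider $ne_i$), and lateral completeness only assembles \emph{disjoint} pieces --- it does not bound an increasing sequence inside a single band. Ruling out this blow-up of the truncated limits is the entire content of the theorem; in the paper's strengthening it occupies Step 2 of Theorem \ref{Main} and is handled via the sup-completion and Theorem \ref{TA}, while in Li--Chen's original argument the countable sup property is used exactly here, to replace the net by a sequence so that the sequential result (universally $\sigma$-complete implies sequentially uo-complete, \cite[Theorem 3.10]{L-65}) can be applied --- not, as in your sketch, merely to patch band-wise limits across countably many basis bands. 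As written, your argument assumes the band-wise limits exist, i.e.\ assumes the conclusion at the crucial step. (Note also that the paper itself does not prove this proposition; it quotes it from \cite{L-174} and then reproves (1) and strengthens (2) inside Theorem \ref{Main}.)
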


It is quite easy to prove, as we shall see, that the first assertion holds
without the assumption of order completeness. Moreover, the converse of 1)
also holds, showing that the converse is true without requiring any further
assumption on the space. Our main result of this section is the following:

\begin{theorem}
\label{Main}A vector lattice $X$ is universally complete if and only if is uo-complete.
\end{theorem}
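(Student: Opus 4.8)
The plan is to prove the two implications separately, using the sup-completion machinery developed in Section 2. For the easy direction, suppose $X$ is $uo$-complete; I first note (as the text suggests) that $uo$-completeness forces order completeness: an order-bounded order-Cauchy net is automatically $uo$-Cauchy, hence $uo$-converges, and being order-bounded the $uo$-limit is in fact an order limit, so $X$ is Dedekind complete. Next, to show universal completeness it suffices to show every disjoint system $(x_\gamma)_{\gamma\in\Gamma}$ in $X_+$ has a supremum. Index the finite subsets of $\Gamma$ by inclusion and let $y_F = \sum_{\gamma\in F} x_\gamma$; this increasing net is $uo$-Cauchy (for any fixed $u\in X_+$, the tails $|y_F - y_{F'}|\wedge u$ go to zero in order because $u$ meets only ``order-negligibly much'' of the disjoint pieces — this is the standard fact that increasing disjoint sums are $uo$-Cauchy), so it $uo$-converges to some $x\in X$, and one checks $x=\sup_\gamma x_\gamma$. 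Hence $X$ is laterally complete and Dedekind complete, i.e.\ universally complete.

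For the hard direction, suppose $X$ is universally complete and let $(x_\alpha)$ be a $uo$-Cauchy net; I must produce a $uo$-limit in $X$. Since $X$ is universally complete it equals its own universal completion, $X=X^u$, and by Proposition~\ref{XSU} the positive cone $X_+$ sits inside the sup-completion $X_s$. The idea is to build a candidate limit inside $X_s$ and then use Theorem~\ref{TA} (equivalently Corollary~\ref{Car}) to show the candidate actually lies in $X=X^u$. Concretely, reduce to a positive net (replace $x_\alpha$ by $(x_\alpha\wedge b)\vee a$ using the characterization of $uo$-convergence recalled in the text, or pass to $x_\alpha^+$ and $x_\alpha^-$). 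For a positive $uo$-Cauchy net, for each fixed $u\in X_+$ the net $(x_\alpha\wedge u)$ is order-Cauchy and order-bounded, hence order-converges in the Dedekind complete space $X$ to some element $z_u\le u$; the family $(z_u)_{u\in X_+}$ is coherent ($z_u\wedge v = z_v\wedge u$) and increasing in $u$, so set $x := \sup_{u\in X_+} z_u \in X_s$. By construction $x_\alpha\wedge u \to z_u = x\wedge u$ in order for every $u$ — so $x_\alpha \overset{uo}{\longrightarrow} x$ as an element of $X_s$; it remains to see $x\in X$.

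This last step is where I expect the real work. If $x\notin X^u = X$, Corollary~\ref{Car} gives $a\in X_+$ with $x\ge ta$ for all real $t>0$. But then $x\wedge u \ge (ta)\wedge u$ for every $u$; choosing $u$ large in the band generated by $a$ (say any $u\ge a$, or rescalings) one gets $z_u = x\wedge u$ dominating $ta$ for all $t$, so $z_u$ is not order-bounded — contradicting that $z_u\le u\in X$. Thus $x\in X$, and $x_\alpha\overset{uo}{\longrightarrow}x$ in $X$, proving $uo$-completeness. The delicate point is making sure the reduction to positive nets and the coherence/well-definedness of $x\in X_s$ are airtight, and that the order-limit $z_u$ in $X$ genuinely matches $x\wedge u$ computed in $X_s$ — this is exactly what property P6) and the order-density of $X$ in $X_s$ are for. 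I would also double-check that a $uo$-Cauchy net in a universally complete space has the property that $(x_\alpha\wedge u)_\alpha$ is order-Cauchy (this follows since $|x_\alpha\wedge u - x_\beta\wedge u|\le |x_\alpha - x_\beta|\wedge(2u)\wedge(\cdots)$, using the Birkhoff inequality and the definition of $uo$-Cauchy), since the whole argument hinges on it.
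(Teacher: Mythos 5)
Your forward direction and your construction of the candidate limit $x=\sup_{u\in X_+}z_{u}\in X_{s}$ (together with the verification $z_{u}=x\wedge u$ via property P6) match the paper's argument and are sound. The fatal problem is in your final step, the proof that $x\in X$. From Corollary \ref{Car} you get $a>0$ with $x\geq ta$ for all $t>0$, and you then claim that ``choosing $u$ large'' yields $z_{u}=x\wedge u\geq ta$ for all $t$, contradicting $z_{u}\leq u$. But for a \emph{fixed} $u\in X_{+}$ the inequality $x\geq ta$ only gives $x\wedge u\geq\left(  ta\right)  \wedge u$, and $\left(  ta\right)  \wedge u\uparrow P_{a}u\leq u$ as $t\rightarrow\infty$; it never gives $x\wedge u\geq ta$. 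To conclude $x\wedge u\geq ta$ for all $t$ you would need a single $u\in X$ dominating $ta$ for every $t$, and no such $u$ exists when $a>0$. So there is no contradiction: each $z_{u}$ is trivially order bounded by $u$. Worse, your argument for this step never invokes the $uo$-Cauchy hypothesis, yet the conclusion is false without it: for the net $x_{n}=n$ in $X=\mathbb{R}$ (universally complete, not $uo$-Cauchy) one has $z_{u}=u$ for every $u$ and $\sup_{u}z_{u}=\infty\in X_{s}\setminus X$. Any argument placing the candidate limit in $X$ must therefore use $uo$-Cauchyness at exactly this point.

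The paper's Step 2 is where the real work happens. From Theorem \ref{TA} one extracts $u:=\inf_{t>0}P_{\left(  \ell-te\right)  ^{+}}e>0$ with $nu\leq\ell$ for all $n$; after replacing $x_{\alpha}$ by $x_{\alpha}\wedge L$ with $L=\sup_{n}nu$, one has simultaneously (a) $x_{\alpha}\wedge nu\uparrow_{n}x_{\alpha}$ for each fixed $\alpha$, so each individual term of the net essentially sits below some $nu$ up to a small error $u_{\gamma}$, and (b) $x_{\beta}\wedge\left(  n+1\right)  u\overset{o}{\longrightarrow}\left(  n+1\right)  u$ in $\beta$, so the net eventually climbs past $\left(  n+1\right)  u$ in the band of $u$. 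Playing (a) against (b) yields $\sup_{\beta,\beta^{\prime}\geq\alpha}\left\vert x_{\beta}-x_{\beta^{\prime}}\right\vert \wedge u\geq u$ for every $\alpha$, contradicting the $uo$-Cauchy property. Some quantitative argument of this kind, pitting the boundedness of each $x_{\alpha}$ against the unboundedness of the limit, is unavoidable, and your proposal is missing it.
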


\begin{proof}
Assume first that $X$ is uo-complete and consider an order Cauchy net $\left(
x_{\alpha}\right)  $ in $X.$ Then $\left(  x_{\alpha}\right)  $ is uo-Cauchy
and hence it is uo-convergent to some $x.$ But since $\left(  x_{\alpha
}\right)  _{\alpha\in A}$ is order Cauchy it has a bounded tail and then it
converges in order to $x.$ This shows that $X$ is order complete. We will
prove now that $X$ is laterally complete. This fact is proved in \cite{L-174}
as noted above and we provide here an alternative proof which involves the
sup-completion $X_{s}.$ Let $\left(  x_{\alpha}\right)  _{\alpha\in A}$ be a
family of mutually disjoint positive vectors in $X$ and define $y_{F}%
=\sup\limits_{\alpha\in F}x_{\alpha}$ for every finite subset $F$ of $A.$ Then
$x_{F}\uparrow x=\sup x_{\alpha}\in X_{s}$ and it is enough to show that $x\in
X.$ Observe that for every $u\in X_{+}$ we have that $x_{F}\wedge u\uparrow
x\wedge u\in X$ because $x\wedge u\leq u\in X.$ It follows that $\left(
x_{F}\right)  $ is uo-Cauchy in $X$ so $x_{F}$ is uo-convergent to some
$x^{\prime}\in X_{+}.$ But since $\left(  x_{F}\right)  $ is increasing,
$x^{\prime}=\sup x_{F}=x$ and we are done.

Conversely, assume that $X$ is universally complete and consider a uo-Cauchy
net $\left(  x_{\alpha}\right)  _{\alpha\in A}$ in $X.$ We have to show that
$\left(  x_{\alpha}\right)  $ is uo-convergent. By considering the nets
$\left(  x_{\alpha}^{+}\right)  $ and $\left(  x_{\alpha}^{-}\right)  $ we may
assume that $\left(  x_{\alpha}\right)  $ is a positive net. For each $a\in
X_{+},$ the net $\left(  x_{\alpha}\wedge a\right)  $ is order convergent to
some limit $\ell_{a}\in X_{+}.$We put $\ell=\sup\limits_{a\in X_{+}}\ell
_{a}\in X_{s}.$ The proof will be completed by two steps.

\textit{Step 1.} We will show first that%
\[
\ell_{a}=\ell\wedge a,\text{ }a\in X_{+}.
\]

Assume by contradiction that $x_{\alpha}\wedge a\longrightarrow\ell_{a}%
<\ell\wedge a$ for some $a\in X_{+}.$ Using property P6) of $X_{s}$ (see
Section 2) we know that%
\[
\ell\wedge a=\sup\limits_{u\in X_{+}}\ell_{u}\wedge a=\sup\limits_{u\in X_{+}%
}\left(  \ell_{u}\wedge a\right)  .
\]
Hence there exists $b\geq0$ such that%
\[
\ell_{b}\wedge a>\ell_{b}\wedge a\wedge\ell_{a}=\ell_{b}\wedge\ell_{a}.
\]
Now we have on one hand%
\[
x_{\alpha}\wedge b\wedge a=\left(  x_{\alpha}\wedge b\right)  \wedge
a\overset{o}{\longrightarrow}\ell_{b}\wedge a,
\]
and on the other hand%
\[
x_{\alpha}\wedge b\wedge a=\left(  x_{\alpha}\wedge b\right)  \wedge\left(
x_{\alpha}\wedge a\right)  \overset{o}{\longrightarrow}\ell_{b}\wedge\ell
_{a},
\]
which leads to the contradiction%
\[
\ell_{a}\wedge\ell_{b}=\ell_{b}\wedge a.
\]

\textit{Step 2.} We claim that $\ell\in X.$ We argue again by contradiction
and we suppose that $\ell\in X_{s}\diagdown X$ Pick an element $e$ in $X$ with
$e>0.$ Then by Theorem \ref{TA}, $u:=\inf\limits_{t>0}P_{\left(
\ell-te\right)  ^{+}}e>0$. Thus%
\[
tu=P_{\left(  \ell-te\right)  ^{+}}tu\leq P_{\left(  \ell-tu\right)  ^{+}%
}ta\leq\ell.
\]
Now, put $L=\sup nu\in X_{s}$ and observe that $x_{\alpha}\wedge nu\uparrow
x_{\alpha}\wedge L$ and that%
\[
\left(  x_{\alpha}\wedge L\right)  \wedge y=\left(  x_{\alpha}\wedge y\right)
\wedge L\overset{o}{\longrightarrow}\left(  \ell\wedge y\right)  \wedge
L=y\wedge L.
\]
Hence, by considering the net $\left(  x_{\alpha}\wedge L\right)  $ instead of
$\left(  x_{\alpha}\right)  $ and $L$ instead of $\ell,$ we may assume that
$x_{\alpha}\leq\ell=\sup nu$. In particular $x_{\alpha}\wedge nu\overset
{o}{\longrightarrow}x_{\alpha}$ for every $\alpha\in A.$ For fixed $\alpha\in
A$ we can find a net $\left(  u_{\gamma}\right)  _{\gamma\in\Gamma}$ with
$u_{\gamma}\downarrow0$ and such that for every $\gamma\in\Gamma$ there is
$n_{\gamma}$ satisfying:%
\[
n\geq n_{0}\leq\left\vert x_{\alpha}\wedge nu-x_{\alpha}\right\vert \leq
u_{\gamma}.
\]
Pick $\gamma$ in $\Gamma$ and choose $n_{\gamma}$ as above. Then we have for
all $n\geq n_{\gamma},$%
\begin{align*}
x_{\beta}-x_{\alpha}  &  =x_{\beta}-x_{\beta}\wedge\left(  n+1\right)
u+x_{\beta}\wedge\left(  n+1\right)  u-x_{\alpha}\wedge nu+x_{\alpha}\wedge
nu-x_{\alpha}\\
&  \geq x_{\beta}\wedge\left(  n+1\right)  u-x_{\alpha}\wedge nu-\left\vert
\ell\wedge nu-x_{\alpha}\right\vert \\
&  \geq x_{\beta}\wedge\left(  n+1\right)  u-nu-u_{\gamma}.
\end{align*}
It follows that%
\begin{align*}
\sup\limits_{\beta,\beta^{\prime}\geq\alpha}\left\vert x_{\beta}%
-x_{\beta^{\prime}}\right\vert \wedge u  &  \geq\lim\limits_{\beta}\left(
x_{\beta}\wedge\left(  n+1\right)  u-nu-u_{\gamma}\right)  \wedge u\\
&  =\left(  u-u_{\gamma}\right)  \wedge u.
\end{align*}
Since this holds for every $\gamma$ in $\Gamma$ we conclude that%
\[
\sup\limits_{\beta,\beta^{\prime}\geq\alpha}\left\vert x_{\beta}%
-x_{\beta^{\prime}}\right\vert \wedge u\geq u.
\]
But this occurs for every $\alpha\in A,$ which contradicts the fact that the
net $\left(  x_{\alpha}\right)  $ is uo-Cauchy and completes the proof.
\end{proof}

\subsection{Some applications}

Next, we give some applications of Theorem \ref{Main}. The first result will
be useful to develop short proofs of some of the results of
\cite{L-65,L-196,L-197}.

\begin{theorem}
\label{TB} \textit{Let} $X$ \textit{be an order continuous Banach lattice with
dual }$X^{\ast}.$

\begin{enumerate}
\item[\textrm{(i)}] If $\left(  \varphi_{\alpha}\right)  $ is a uo-Cauchy net
in $X^{\ast}$ that converges \textit{in the weak}${}^{\ast}$ \textit{topology}
$\sigma(X^{\ast},X)$ is uo-convergent to the same limit.

\item[(ii)] If $\left(  x_{\alpha}\right)  $ is a uo-Cauchy net in $X$ that
converges weakly then it is uo-convergent to the same limit.
\end{enumerate}
\end{theorem}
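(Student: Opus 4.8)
The plan is to reduce Theorem~\ref{TB} to Theorem~\ref{Main} via the standard fact that an order continuous Banach lattice is an ideal in its universal completion, together with the known representation of $uo$-convergence in spaces with a weak unit. I would treat (i) and (ii) in parallel, since the essential mechanism is the same once we pass to the appropriate ambient universally complete lattice.

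For part (ii): since $X$ is an order continuous Banach lattice, $X$ is an ideal (in fact order dense) in its universal completion $X^u$, and $X^u$ is universally complete; by Theorem~\ref{Main}, $X^u$ is $uo$-complete. Given a $uo$-Cauchy net $(x_\alpha)$ in $X$, it remains $uo$-Cauchy in $X^u$ (since $uo$-convergence passes to order dense sublattices and ideals), so it is $uo$-convergent in $X^u$ to some limit $y$. The task is then to identify $y$ with the weak limit $x \in X$ and to argue that $uo$-convergence to $y$ in $X^u$ implies $uo$-convergence to $x$ within $X$. The first point uses that, in an order continuous Banach lattice, $uo$-convergence of a sequence (or net with a bounded tail) implies convergence in some suitable weaker sense; more precisely, I would show that if $x_\alpha \xrightarrow{uo} y$ in $X^u$ and $(x_\alpha)$ lies in the ideal $X$, then for any $0 \le u \in X$ the net $x_\alpha \wedge u \to y \wedge u$ in order in $X$, hence $y \wedge u \in X$, and by taking a weak order unit of $X$ (or a suitable supremum of such $u$) one deduces $y \in X$ and $x_\alpha \xrightarrow{uo} y$ in $X$. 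Finally, a $uo$-convergent net in an order continuous Banach lattice that also converges weakly must have the two limits agree: weak limits are unique, and $uo$-convergence to $y$ together with the order continuity of the norm forces, along an appropriate construction, weak convergence to $y$ as well (e.g. via the fact that $uo$-null implies weak$^*$-null after truncation and using that order intervals are weakly compact in the order continuous setting). Hence $y = x$.

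For part (i): the dual $X^\ast$ of an order continuous Banach lattice is a Dedekind complete Banach lattice, and one embeds it in its universal completion $(X^\ast)^u$, which is $uo$-complete by Theorem~\ref{Main}. A $uo$-Cauchy net $(\varphi_\alpha)$ in $X^\ast$ is then $uo$-convergent in $(X^\ast)^u$ to some $\psi$; the same ideal/order-density argument as above yields $\psi \in X^\ast$ and $\varphi_\alpha \xrightarrow{uo} \psi$ in $X^\ast$. It remains to match $\psi$ with the weak$^\ast$ limit $\varphi$. Here I would use that $uo$-convergence in $X^\ast$ implies, after truncation by a positive functional, convergence against every $x \in X_+$ (using order continuity of the pairing in a suitable sense and that $X$ separates points of order intervals in $X^\ast$), so that $\varphi_\alpha \to \psi$ in $\sigma(X^\ast, X)$ as well; uniqueness of weak$^\ast$ limits gives $\psi = \varphi$.

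The main obstacle I expect is the last identification step in each part: passing from ``$(x_\alpha)$ is $uo$-convergent in the big space'' plus ``$(x_\alpha)$ converges in a weak topology'' to ``the two limits coincide''. Pure $uo$-convergence is weaker than order convergence and does not, by itself, imply convergence in norm or in any linear topology, so one genuinely needs to exploit order continuity of the norm (resp. the structure of $X^\ast$) to transfer $uo$-information into the weak (resp. weak$^\ast$) topology. The clean route is probably: reduce to a positive net; truncate at a fixed $u \in X_+$ so that $x_\alpha \wedge u \xrightarrow{o} y \wedge u$ (genuine order convergence, hence norm convergence by order continuity); deduce $\langle x_\alpha \wedge u, \phi\rangle \to \langle y \wedge u, \phi\rangle$ for all $\phi \in X^\ast_+$; and compare with the weak limit by letting $u$ increase to exhaust $X$, using that the weak limit of $(x_\alpha)$ must also be the order limit of its truncations. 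The bookkeeping to make ``letting $u$ range over $X_+$'' rigorous — essentially reproving that $y = \sup_u (y \wedge u)$ and that this supremum lands back in $X$ — is where the care is needed, and it parallels Steps~1 and~2 in the proof of Theorem~\ref{Main}.
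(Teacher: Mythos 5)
Your overall strategy coincides with the paper's: both parts are reduced to Theorem \ref{Main} by passing to the universal completion, extracting a uo-limit there, and then identifying that limit with the weak (resp.\ weak$^{*}$) limit via truncation. For (ii) your plan is essentially the paper's proof: after reducing to a positive net, Theorem \ref{Main} gives $x_{\alpha}\overset{uo}{\longrightarrow}z$ in $X^{u}$; for $y\in X_{+}$ regularity gives $x_{\alpha}\wedge y\overset{o}{\longrightarrow}z\wedge y$ in $X$, order continuity upgrades this to norm (hence weak) convergence, and comparison with $x_{\alpha}\overset{w}{\longrightarrow}x$ yields $z\wedge y=x\wedge y$ for all $y$, so $z=x$. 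The one point you should not wave at is the comparison itself: weak convergence of $(x_{\alpha})$ does not give weak convergence of $(x_{\alpha}\wedge y)$ to $x\wedge y$ for free, since lattice operations are not weakly continuous; you must combine the already established norm convergence of $(x_{\alpha}\wedge y)$ with positivity to pin that limit down. (The paper itself dismisses this step with ``it follows easily''.)

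Part (i) is where your sketch has a genuine gap. The tool you reach for --- weak compactness of order intervals in the order continuous space $X$ --- lives on the wrong side of the duality and does not produce the implication you need. What the paper uses is \cite[Theorem 2.1]{L-196}: for $X$ order continuous, order convergence in $X^{*}$ implies $w^{*}$-convergence. Moreover the paper truncates from above rather than below: for $h\in X^{*}$ with $h\geq\varphi$ one has $\varphi_{\alpha}\wedge h\overset{o}{\longrightarrow}\psi\wedge h$, hence $\varphi_{\alpha}\wedge h\overset{w^{*}}{\longrightarrow}\psi\wedge h$ by the cited result, while on the other side $\varphi_{\alpha}\wedge h\overset{w^{*}}{\longrightarrow}\varphi\wedge h=\varphi$; thus $\psi\wedge h=\varphi$ for every such $h$, and the structure of the sup-completion (the paper invokes property P4) then forces $\psi=\varphi$. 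Without some version of this order-to-$w^{*}$ statement for duals of order continuous lattices, your identification of $\psi$ with the weak$^{*}$ limit does not close.
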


\begin{proof}
(i) As usual, we may assume that $\left(  \varphi_{\alpha}\right)  $ is
positive. By Theorem \ref{Main}, $\varphi_{\alpha}\overset{uo}{\longrightarrow
}\psi$ in $\left(  X^{\ast}\right)  ^{u}$ for some $\psi\in\left(  X^{\ast
}\right)  ^{u}$ and it will be enough to establish that $\psi=\varphi.$ To
this end, in view of property P4) of $X_{s}$ (see Section 2), we need only to
show that $\psi\wedge h=\varphi$ for all $h\in X^{\ast}$ with $h\geq\varphi.$
Now, pick $h$ in $X^{\ast}$ with $h\geq\varphi.$ Since $\varphi_{n}%
\overset{w^{\ast}}{\longrightarrow}\varphi$ it is easily seen that%
\[
\varphi_{\alpha}\wedge h\overset{w^{\ast}}{\longrightarrow}\varphi\wedge
h=\varphi.
\]
On the other hand $\varphi_{n}\wedge h\overset{o}{\longrightarrow}\psi\wedge
h$ and by \cite[Theorem 2.1]{L-196} $\varphi_{\alpha}\wedge h\overset{w^{\ast
}}{\longrightarrow}\psi\wedge h.$ Hence $\psi\wedge h=\varphi$ as required.

(ii) Consider a net $\left(  x_{\alpha}\right)  $ with $x_{\alpha}\overset
{w}{\longrightarrow}x$ and assule as usual that $\left(  x_{\alpha}\right)  $
is positive. Using Theorem \ref{Main} we know that $x_{\alpha}\overset
{uo}{\longrightarrow}z$ for some $z\in X^{u}.$ Let $y\in X^{+},$ then
$x_{\alpha}\wedge y\overset{o}{\longrightarrow}z\wedge y$ in $X^{u}$ and then
in $X$ (since $X$ is regular in $X^{u}$ and order complete). Using the fact
that $X$ is order continuous we get $x_{\alpha}\wedge y\overset{\left\Vert
{}\right\Vert }{\longrightarrow}z\wedge y$ and so $x_{\alpha}\wedge
y\overset{w}{\longrightarrow}z\wedge y.$ On the other hand in view of
$x_{\alpha}\overset{w}{\longrightarrow}x,$ it follows easily that $x_{\alpha
}\wedge y\overset{w}{\longrightarrow}x\wedge y$ for every $y\in X_{+}.$ Hence
$x\wedge y=z\wedge y$ for all $y\in X_{+},$ which gives the equality $x=z$ and
completes the proof.
\end{proof}

As an immediate consequence of Theorem \ref{TB} we get the following results.

\begin{theorem}
\cite[Theorem 2.2]{L-196} \textit{Let} $X$ \textit{be an order continuous
Banach lattice. Then any norm bounded uo-Cauchy net in} $X^{\ast}$
\textit{converges }in uo \textit{and} $|\sigma|(X^{\ast},X)$ \textit{to the
same limit}.
\end{theorem}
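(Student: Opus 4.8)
The plan is to derive the statement from Theorem~\ref{TB}(i) by means of a weak$^{\ast}$-compactness argument. So let $(\varphi_{\alpha})_{\alpha\in A}$ be a norm bounded $uo$-Cauchy net in $X^{\ast}$, lying in some multiple of the closed unit ball of $X^{\ast}$. By the Banach--Alaoglu theorem there is a subnet $(\varphi_{\alpha_{\beta}})$ that is $\sigma(X^{\ast},X)$-convergent to some $\varphi\in X^{\ast}$. Being a subnet of a $uo$-Cauchy net, $(\varphi_{\alpha_{\beta}})$ is again $uo$-Cauchy --- its difference net is a subnet of the order null net $(\varphi_{\alpha}-\varphi_{\alpha'})_{(\alpha,\alpha')}$ --- so Theorem~\ref{TB}(i) applies and gives $\varphi_{\alpha_{\beta}}\overset{uo}{\longrightarrow}\varphi$.

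Next I would promote this to convergence of the whole net, using the standard fact that a $uo$-Cauchy net possessing a $uo$-convergent subnet is itself $uo$-convergent to the same limit. The proof is routine: fix $y\in X_{+}$; from the $uo$-Cauchy condition one gets a net $z_{\gamma}\downarrow 0$ with $|\varphi_{\alpha}-\varphi_{\alpha'}|\wedge y\le z_{\gamma}$ for $\alpha,\alpha'$ beyond a threshold $\mu_{\gamma}$, and from the subnet a net $w_{\delta}\downarrow 0$ with $|\varphi_{\alpha_{\beta}}-\varphi|\wedge y\le w_{\delta}$ for $\beta$ large; using the inequality $(a+b)\wedge y\le a\wedge y+b\wedge y$ for $a,b\in X_{+}$ together with a single subnet index $\beta_{0}$ chosen simultaneously large and with $\alpha_{\beta_{0}}\ge\mu_{\gamma}$, one obtains $|\varphi_{\alpha}-\varphi|\wedge y\le z_{\gamma}+w_{\delta}$ for all $\alpha\ge\mu_{\gamma}$, while $z_{\gamma}+w_{\delta}\downarrow 0$. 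Since $y\in X_{+}$ was arbitrary, $\varphi_{\alpha}\overset{uo}{\longrightarrow}\varphi$, which is the $uo$ part of the assertion.

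It remains to show $\varphi_{\alpha}\to\varphi$ in $|\sigma|(X^{\ast},X)$, i.e.\ $\langle|\varphi_{\alpha}-\varphi|,u\rangle\to 0$ for every $u\in X_{+}$. Set $\eta_{\alpha}=|\varphi_{\alpha}-\varphi|$; then $(\eta_{\alpha})$ is positive, norm bounded, and --- because $\varphi_{\alpha}\overset{uo}{\longrightarrow}\varphi$ --- $uo$-null, hence $uo$-Cauchy. Assume, for a contradiction, that $\langle\eta_{\alpha},u\rangle\not\to 0$ for some $u\in X_{+}$; then some subnet satisfies $\langle\eta_{\alpha_{\gamma}},u\rangle\ge\varepsilon>0$. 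Applying Banach--Alaoglu once more, pass to a further subnet that is $\sigma(X^{\ast},X)$-convergent to some $\psi\in X^{\ast}$; since the terms are positive, $\psi\ge 0$ and $\langle\psi,u\rangle\ge\varepsilon>0$. But this further subnet is $uo$-Cauchy (a subnet of the $uo$-null net $(\eta_{\alpha})$) and $\sigma(X^{\ast},X)$-convergent to $\psi$, so Theorem~\ref{TB}(i) forces it to $uo$-converge to $\psi$; as it also $uo$-converges to $0$ and $uo$-limits are unique, $\psi=0$, contradicting $\langle\psi,u\rangle>0$. Hence $\langle\eta_{\alpha},u\rangle\to 0$ for all $u\in X_{+}$, and the proof is complete.

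I expect the only slightly delicate parts to be the index bookkeeping in the second paragraph and, conceptually, the role of norm boundedness: it is precisely this hypothesis that keeps all the limits inside $X^{\ast}$ (rather than escaping into the sup-completion), which is why weak$^{\ast}$ compactness is the natural device and why Theorem~\ref{TB}(i) --- where order continuity of $X$ is what links the lattice limit to the $\sigma(X^{\ast},X)$-limit --- can be invoked to close every gap.
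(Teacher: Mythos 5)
Your argument is correct and follows essentially the route the paper intends when it calls this an ``immediate consequence'' of Theorem~\ref{TB}: Banach--Alaoglu supplies a $\sigma(X^{\ast},X)$-convergent subnet, Theorem~\ref{TB}(i) identifies its $uo$-limit with the weak$^{\ast}$ limit, and the convergence passes to the whole net (the paper does this last step, in the analogous weakly compact case, by noting the full net is already $uo$-convergent in $(X^{\ast})^{u}$ via Theorem~\ref{Main}, whereas you give a direct $uo$-Cauchy-plus-convergent-subnet argument --- both work). Your additional compactness argument for the $|\sigma|(X^{\ast},X)$ part, which the paper leaves implicit, is also sound.
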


\begin{theorem}
\cite[Theorem 4.3]{L-197} Let $X$ be an order continuous Banach lattice. Then
every relatively weakly compact uo-Cauchy net converges uo-and $\left\vert
\sigma\right\vert \left(  X,X^{\ast}\right)  $ to the same limit.
\end{theorem}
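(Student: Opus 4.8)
The plan is to deduce this from Theorem \ref{TB}(ii). A relatively weakly compact net has the property that every subnet has a further subnet converging weakly; since the net is also uo-Cauchy, any such weakly convergent subnet must, by Theorem \ref{TB}(ii) applied to the subnet (which is itself uo-Cauchy), be uo-convergent to its weak limit. So first I would argue that all these weak subnet-limits coincide. Indeed, if a uo-Cauchy net had two subnets converging weakly to $x$ and to $y$, then by Theorem \ref{TB}(ii) both subnets would be uo-convergent, to $x$ and $y$ respectively; but two subnets of a uo-Cauchy net must uo-converge to the same limit (the difference net uo-converges to $0$ along the product directed set, and passing to the appropriate subnets forces $x = y$). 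Hence there is a single element $x$ which is the weak limit of every weakly convergent subnet, and by relative weak compactness together with a standard subnet argument this means $x_{\alpha} \xrightarrow{w} x$ for the whole net.

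Once the whole net converges weakly to $x$, I apply Theorem \ref{TB}(ii) directly to conclude that $x_{\alpha} \xrightarrow{uo} x$. For the $|\sigma|(X, X^{\ast})$ convergence, I would invoke \cite[Theorem 2.1]{L-196} (already used in the proof of Theorem \ref{TB}(i)): in an order continuous Banach lattice, if $x_{\alpha} \xrightarrow{o} z$ then $x_{\alpha}$ converges to $z$ in $|\sigma|(X, X^{\ast})$. Reducing to positive nets as usual, for each $y \in X_{+}$ we have $x_{\alpha} \wedge y \xrightarrow{o} x \wedge y$ in $X$, hence $x_{\alpha} \wedge y \to x \wedge y$ in $|\sigma|(X,X^{\ast})$; combining this with the weak convergence $x_{\alpha} \xrightarrow{w} x$ and a truncation argument (as in the proof of Theorem \ref{TB}) should upgrade the convergence of the full net to $|\sigma|(X, X^{\ast})$.

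The main obstacle is the bookkeeping with subnets: carefully phrasing "relatively weakly compact" so that the subnet extracted is genuinely a subnet of the original net (so that it inherits the uo-Cauchy property), and then showing that the common weak limit of all convergent subnets is in fact the weak limit of the whole net. This is a routine point-set argument — if a net in a topological space has the property that every subnet has a further subnet converging to a fixed point $x$, then the net itself converges to $x$ — but it must be stated cleanly. Everything else is a direct appeal to Theorem \ref{TB}(ii) and to the order-continuity machinery already deployed above.
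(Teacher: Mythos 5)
Your proposal is correct and follows essentially the same route as the paper: extract a weakly convergent subnet (which is still uo-Cauchy) and apply Theorem \ref{TB}(ii) to it. The one place you diverge is in passing from the subnet back to the full net. You first upgrade to weak convergence of the whole net (via the ``every subnet has a further subnet converging to the same $x$'' argument, preceded by a uniqueness argument for the subnet limits) and only then apply Theorem \ref{TB}(ii) to the full net; the paper short-circuits this by observing that the full net, being uo-Cauchy, already uo-converges to some limit in $X^{u}$ (Theorem \ref{Main} applied to the universally complete space $X^{u}$), so that limit must coincide with the uo-limit $x$ of the subnet, and the whole net therefore uo-converges to $x$. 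Both are valid; the paper's version avoids exactly the subnet bookkeeping you identify as the main obstacle, and your uniqueness-of-limits step is in substance the same fact (a uo-Cauchy net with a uo-convergent subnet uo-converges to that limit) that powers the paper's shortcut. Incidentally, you also sketch the $\left\vert \sigma\right\vert \left(X,X^{\ast}\right)$ half of the conclusion, which the paper's proof does not address explicitly; that part of your sketch would need to be carried out in full, but the truncation argument you indicate is the standard one.
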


\begin{proof}
If $\left(  x_{\alpha}\right)  $ is relatively weakly compact then it has a
subnet $\left(  x_{\beta}\right)  $ weakly convergent to some $x\in X.$
According to Theorem \ref{TB} $x_{\beta}\overset{uo}{\longrightarrow}x$ and
then $x_{\alpha}\overset{uo}{\longrightarrow}x$ because $\left(  x_{\alpha
}\right)  $ is uo-convergent in $X^{u}.$
\end{proof}

\begin{theorem}
\cite[Theorem 4.7]{L-197} Let $X$ be an order continuous Banach lattice. The
following statements are equivalent.

\begin{enumerate}
\item $X$ is a KB-space.

\item $X$ is boundedly uo-complete.

\item $X$ is sequentially uo-complete.
\end{enumerate}
\end{theorem}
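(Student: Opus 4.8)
The plan is to prove the cycle (1) $\Rightarrow$ (2) $\Rightarrow$ (3) $\Rightarrow$ (1). The implication (2) $\Rightarrow$ (3) is immediate, since sequential uo-completeness of norm bounded sequences is a special case of bounded uo-completeness: every norm bounded uo-Cauchy sequence is in particular a norm bounded uo-Cauchy net, hence uo-converges by hypothesis. So the work lies in the other two implications, and both should be short given the machinery already in place.

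For (1) $\Rightarrow$ (2), recall that a KB-space is an order continuous Banach lattice in which every increasing norm bounded net has a norm limit; equivalently, $X$ contains no lattice copy of $c_0$. Let $(x_\alpha)$ be a norm bounded uo-Cauchy net; by splitting into positive and negative parts we may assume it is positive. Since $X$ is order continuous and order complete, it sits regularly in $X^u$, and being universally complete $X^u$ is uo-complete by Theorem \ref{Main}, so $x_\alpha \overset{uo}{\longrightarrow} z$ for some $z \in X^u$. For each $a \in X_+$ the net $(x_\alpha \wedge a)$ is order Cauchy, hence order convergent in $X$ (order completeness), with limit $z \wedge a$; moreover $(x_\alpha \wedge a)$ is increasing-tailed enough — more carefully, it is a norm bounded net whose order limit $z \wedge a$ lies in $X$, and the norm boundedness plus the KB-property forces $z \in X$ itself: indeed, writing $z = \sup_a (z \wedge a)$ with each $z \wedge a \in X_+$ and $\|z \wedge a\| \le \sup_\alpha \|x_\alpha\|$, the upward directed net $(z \wedge a)_{a \in X_+}$ in $X$ is norm bounded, so the KB-property gives a norm limit in $X$, which must equal its supremum $z$. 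Hence $z \in X$ and $x_\alpha \overset{uo}{\longrightarrow} z$ in $X$, proving bounded uo-completeness.

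For (3) $\Rightarrow$ (1), I would argue by contraposition using the classical structure theory: if $X$ is order continuous but not a KB-space, then $X$ contains a closed sublattice lattice-isometric to $c_0$, or equivalently there is a positive disjoint sequence $(e_n)$ in $X$ with $\sup_{n} \|\sum_{k=1}^n e_k\| < \infty$ while $\sum_{k=1}^n e_k$ does not converge in norm. Set $s_n = \sum_{k=1}^n e_k$. Because the $e_k$ are disjoint, for every $y \in X_+$ we have $s_n \wedge y \uparrow (\sup_k e_k) \wedge y$ and in fact $|s_n - s_m| \wedge y = (\sum_{k=m+1}^n e_k)\wedge y \to 0$ in order as $m,n \to \infty$ — this is the standard fact that a disjoint sequence is uo-null in its tails, so $(s_n)$ is a norm bounded uo-Cauchy sequence. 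If (3) held, $(s_n)$ would uo-converge to some $s \in X$; but a uo-convergent sequence that is also order bounded (here norm bounded in an order continuous space need not be order bounded, so one must be a little careful) — instead, observe directly that uo-convergence $s_n \overset{uo}{\longrightarrow} s$ together with $s_n \uparrow$ forces $s = \sup_n s_n$ and hence $e_n = s_n - s_{n-1} \downarrow 0$ relative to the tail, giving $\|e_n\| \to 0$ by order continuity, which contradicts $\|e_n\| \ge$ the constant gap coming from $c_0$-equivalence. This contradiction shows (3) $\Rightarrow$ (1).

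The main obstacle I anticipate is the careful bookkeeping in (3) $\Rightarrow$ (1): passing cleanly from "norm bounded uo-Cauchy sequence" to a genuine contradiction requires knowing that a uo-convergent increasing sequence converges to its supremum and that, in an order continuous lattice, a disjoint positive sequence that is \emph{not} norm-null cannot be "summed away" — the $c_0$-embedding characterization of non-KB order continuous lattices (due to the classical theory, e.g.\ via the fact that an order continuous Banach lattice is a KB-space iff it contains no sublattice isomorphic to $c_0$) is what makes this rigorous, and I would cite it rather than reprove it. Everything else reduces to Theorem \ref{Main}, order continuity, and the elementary calculus of disjoint sequences.
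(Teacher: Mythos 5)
Your proof of (1) $\Rightarrow$ (2) is correct and is essentially the paper's argument: apply Theorem \ref{Main} in $X^{u}$ to get a uo-limit $z$, use regularity and order continuity to see that each $z\wedge a$ lies in $X$ with $\left\Vert z\wedge a\right\Vert \leq\sup_{\alpha}\left\Vert x_{\alpha}\right\Vert$, and then invoke the KB-property on the increasing norm bounded net $\left(  z\wedge a\right)_{a\in X_{+}}$ to conclude $z=\sup_{a}\left(  z\wedge a\right)  \in X$. The only difference is one of scope: the paper proves \emph{only} (1) $\Rightarrow$ (2), leaving (2) $\Rightarrow$ (3) (trivial, as you note) and (3) $\Rightarrow$ (1) to the cited source, whereas you also sketch (3) $\Rightarrow$ (1) via the standard fact that a non-KB order continuous Banach lattice contains a disjoint positive sequence equivalent to the $c_{0}$-basis, whose partial sums form a norm bounded uo-Cauchy sequence with no uo-limit in $X$; that argument is sound (an increasing uo-convergent sequence converges to its supremum, and order continuity then forces $\left\Vert e_{n}\right\Vert \rightarrow0$, a contradiction), so your write-up is actually more self-contained than the paper's on this point.
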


\begin{proof}
We need only to prove that (1) $\implies$(2). Let $\left(  x_{\alpha}\right)
$ be a norm bounded uo-Cauchy net. By considering $\left(  x_{\alpha}^{\pm
}\right)  $ we may assume that $\left(  x_{\alpha}\right)  $ is positive. In
view of Theorem \ref{Main} we know that $x_{\alpha}\overset{uo}%
{\longrightarrow}x$ for some $x\in X^{u}.$ Since $X$ is order complete and
regular in $X^{u}$ it follows from \cite[Theorem 3.2]{L-65} that $x_{\alpha
}\wedge a\overset{o}{\longrightarrow}x\wedge a$ in $X$ for all $a\in X.$ Now
observe that $\left\Vert x\wedge a\right\Vert \leq\sup\left\Vert x_{\alpha
}\right\Vert <\infty.$ Hence the net $\left(  x\wedge a\right)  _{a\in X_{+}}$
is norm bounded and increasing in the KB-space $X.$ It follows that
$x=\sup\limits_{a\in X}x\wedge a\in X$ and we are done.
\end{proof}

Recall that a Banach lattice is said to satisfy the weak Fatou property if
there is a real $r>0$ such that for every increasing net $\left(  x_{\alpha
}\right)  $ with the supremum $x\in X$ it follows that $\left\Vert
x\right\Vert \leq r\sup\limits_{\alpha}\left\Vert x_{\alpha}\right\Vert .$
Since there is a Banach lattice $X$ with weak Fatou property such that
$X_{n}^{\widetilde{}}=\left\{  0\right\}  $ the next theorem improves
\cite[Theorem 2.3]{L-174}. Theorem \ref{Main} will be used again to get a
short proof.

\begin{theorem}
\textit{Let} $(x_{\alpha})$ \textit{be a norm bounded positive increasing net
in a Banach lattice }$X$\textit{. Suppose that }$X$ satisfies one of the
following conditions: (i) $X_{n}^{\sim}$ \textit{separates points of} $X;$
(ii) $X$ satisfies the weak Fatou property. \textit{Then} $(x_{\alpha})$
\textit{is uo-Cauchy in} $X$.
\end{theorem}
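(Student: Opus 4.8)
The plan is to show that $(x_\alpha)$ is $uo$-Cauchy by first reducing to the order-complete setting and then exploiting that a norm-bounded increasing net which fails to be $uo$-Cauchy would have to "escape to infinity" inside the sup-completion, contradicting the norm bound via the relevant Fatou-type hypothesis. Concretely, the net $(x_\alpha - x_\beta)_{(\alpha,\beta)}$ is $uo$-null precisely when $(x_\alpha \wedge a)$ is order-Cauchy in $X$ for every $a \in X_+$; since each such net is increasing and bounded above by $a$, order-Cauchyness here is equivalent to order-convergence, which since $X$ is order complete is automatic --- $x_\alpha \wedge a \uparrow \ell_a := \sup_\alpha (x_\alpha \wedge a) \in X$ whenever $X$ is Dedekind complete. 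So the whole difficulty is concentrated in the case where $X$ is not order complete; there the supremum $\sup_\alpha(x_\alpha \wedge a)$ need not exist in $X$.

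The key device is to pass to the order completion $X^\delta$, which is a Dedekind complete Banach lattice under the canonical extension of the norm (here one must check the extended norm still has the relevant property: under (i), $(X^\delta)_n^\sim$ separates points since $X$ is order dense in $X^\delta$; under (ii), the weak Fatou property transfers to $X^\delta$ with the same constant $r$, because every increasing net in $X^\delta$ with supremum in $X^\delta$ can be approximated from below by elements of $X$). In $X^\delta$ the net $(x_\alpha)$ is still norm bounded, and by the order-complete case it is $uo$-Cauchy there, so it is $uo$-convergent in $(X^\delta)^u = X^u$ to some $z$ by Theorem~\ref{Main}. Then $x_\alpha \wedge a \overset{o}{\longrightarrow} z \wedge a$ in $X^\delta$ for every $a \in X^\delta_+$. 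Restricting to $a \in X_+$ and using that $x_\alpha \wedge a \in X$ with $X$ regular in $X^\delta$, we get $x_\alpha \wedge a \uparrow z \wedge a$ in $X^\delta$; the point is to show $z \wedge a \in X$ for every $a \in X_+$, which is exactly what is needed for $(x_\alpha)$ to be $uo$-Cauchy in $X$.

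To see $z \wedge a \in X$, note $\|z \wedge a\| \le \sup_\alpha \|x_\alpha \wedge a\| \le \sup_\alpha \|x_\alpha\| < \infty$ in case (i), and $\|z \wedge a\| \le r \sup_\alpha\|x_\alpha\| < \infty$ in case (ii) by the weak Fatou property of $X^\delta$; in either case $z \wedge a$ lies in the norm-closed unit ball scaled appropriately. Under hypothesis (i) one invokes that $X_n^\sim$ separating points forces order intervals to behave well --- specifically, following the argument in \cite[Theorem~2.3]{L-174}, a net of the form $x_\alpha \wedge a \uparrow$ which is norm bounded must have its $X^\delta$-supremum actually in $X$ because otherwise some $0 < \varphi \in X_n^\sim$ would witness unboundedness of $\varphi(x_\alpha \wedge a)$. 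Under hypothesis (ii), one argues directly: the increasing net $(x_\alpha \wedge a)$ in $X$ is order bounded by $a$, hence has a supremum $\ell_a$ in... wait --- $X$ need not be order complete, so instead one uses that $X^\delta$-Fatou plus norm-boundedness confines $z \wedge a$ to $X$ via the same separation/density mechanism. The main obstacle, then, is precisely this last step: transferring "the supremum stays in $X$" from the norm bound, and it is here that the two hypotheses (i) and (ii) do genuinely different work, so I would treat them as two separate sub-cases and lean on the corresponding arguments already developed in \cite{L-174} for the skeleton, with Theorem~\ref{Main} supplying the clean existence of the $uo$-limit $z$ that makes everything else routine.
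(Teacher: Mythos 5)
Your reduction collapses at its very first step: you claim that for a positive increasing net $(x_{\alpha})$, being uo-Cauchy is equivalent to each truncated net $(x_{\alpha}\wedge a)$, $a\in X_{+}$, being order Cauchy, and hence that the conclusion is ``automatic'' when $X$ is Dedekind complete. Only one direction of that equivalence is true. The converse fails: in any vector lattice with a weak order unit $e$, put $x_{n}=ne$; then $x_{n}\wedge a\uparrow a$ for every $a\in X_{+}$, so every truncation is order convergent, yet $\left\vert x_{n}-x_{m}\right\vert \wedge e=e$ for $n\neq m$, so $(x_{n})$ is not uo-Cauchy. This is precisely the phenomenon the theorem is about: an increasing net can have all its truncations converge while its supremum, computed in the sup-completion, escapes $X^{u}$, and by Theorem \ref{TA} and Corollary \ref{Car} this happens exactly when there is some $a>0$ with $x_{\alpha}\wedge na\uparrow na$ for all $n$. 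Hypotheses (i) and (ii) exist to rule out that escape; if the order complete case were automatic they would be redundant. Consequently your assertion that the net is uo-Cauchy in $X^{\delta}$ ``by the order-complete case'' is unsupported, and Theorem \ref{Main} cannot then be invoked to produce the limit $z$.

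The second half of your argument is also not a proof: you acknowledge yourself that ``the main obstacle'' --- the step where (i) or (ii) actually does any work --- is left open, and you defer it to \cite{L-174}. Note moreover that once one knows $z=\sup x_{\alpha}\in X^{u}$, the proof is already finished ($x_{\alpha}\uparrow z$ is order, hence uo, convergent in $X^{u}$, hence uo-Cauchy in $X$ by regularity of $X$ in $X^{u}$), so your final step of showing $z\wedge a\in X$ is not needed for the stated conclusion. The paper's argument is short and concentrated exactly where your proposal is silent: set $x=\sup x_{\alpha}\in X_{s}$, suppose $x\notin X^{u}$, use Corollary \ref{Car} to produce $a>0$ with $x\geq ta$ for all real $t>0$, deduce $x_{\alpha}\wedge na\uparrow na$ in $X$, and then contradict the norm bound --- under (i) by applying a positive $\varphi\in X_{n}^{\sim}$ with $\varphi(a)>0$ to get $n\varphi(a)\leq M\left\Vert \varphi\right\Vert$ for all $n$, under (ii) by the weak Fatou inequality $n\left\Vert a\right\Vert \leq r\sup_{\alpha}\left\Vert x_{\alpha}\right\Vert$. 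That contradiction is the entire content of the theorem, and it is the piece missing from your proposal.
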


\begin{proof}
Let $x=\sup x_{\alpha}\in X_{s}.$ It is sufficient to show that $x\in X^{u}.$
Assume by contradiction that it is not the case, then by Corollary \ref{Car}
that there exists an element $a\in X$ such that $x\geq ta>0$ for all real
$t>0.$ It follows that
\begin{equation}
x_{\alpha}\wedge na\uparrow na\text{ in }X. \label{E4}%
\end{equation}
If (i) is satisfied, then there is $0\leq\varphi\in X_{n}^{\widetilde{}}$ such
that $\varphi\left(  a\right)  >0.$ It follows from $\varphi\left(  x_{\alpha
}\wedge na\right)  \uparrow n\varphi\left(  a\right)  $ that $n\varphi\left(
a\right)  \leq M\left\Vert \varphi\right\Vert $ for all $n,$ which is
impossible.\newline If ii) is satisfied then in virtu of \ref{E4} we obtain
$n\left\Vert a\right\Vert \leq C\sup\left\Vert x_{\alpha}\right\Vert $ for all
$n,$ which is also impossible. This completes the proof.
\end{proof}

\subsection{The $\sigma$-completeness}

It was proved \cite[Corollary 3.12]{L-65} that a uo-null sequence in $X$ is
o-null $X^{u}$. A concrete example in \cite[Example 2.6]{L-174} shows that
this result fails for nets. We can ask the following questions

\begin{problem}
Suppose that $X$ is an arbitrary order complete but not laterally complete
vector lattice. Is there a uo-Cauchy net in $X$ that fails to be order
convergent in $X^{u}$?
\end{problem}

\begin{problem}
Is it true that if $X$ is a vector lattice for which order convergence and
uo-convergence agree for sequences then $X$ is universally $\sigma$-complete?
\end{problem}

So far, we do not have an answer to the first question. However, the answer to
the second question is negative as the next example shows.

\begin{example}
Consider the space $X$ of real sequences $x=\left(  x_{n}\right)  _{n\geq1}$
such that $x_{2n}=x_{2n+1}$ for $n$ enough large. Then $X$ is a vector lattice
which is not laterally $\sigma$-complete. Indeed the sequence $\left(
y^{n}\right)  $ define by
\[
y^{n}=\left\{
\begin{array}
[c]{cc}%
e^{n} & \text{if }n\text{ is even}\\
0 & \text{if }n\text{ is odd}%
\end{array}
\right.
\]
has no supremum in $X,$ where $e^{n}$ is the sequence defined by $e_{k}^{n}=1$
if $k=n$ and $e_{k}^{n}=0$ otherwise. However, it is easy to see that a
sequence $\left(  y^{n}\right)  $ in $X$ is uo-null iff it is o-null.
\end{example}

In spite of the last example which give a negative answer to our question we
have the following `positive' result.

\begin{proposition}
For an order $\sigma$-complete vector lattice $X$ the following are equivalent

\begin{enumerate}
\item $X$ is universally $\sigma$-complete;

\item A sequence in $X$ is uo-null if only if it is o-null.
\end{enumerate}
\end{proposition}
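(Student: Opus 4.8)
The plan is to prove the two implications separately, using Theorem~\ref{Main} as the main engine for the harder direction. For $(1)\implies(2)$: assume $X$ is universally $\sigma$-complete and order $\sigma$-complete. A universally $\sigma$-complete vector lattice is, in particular, laterally $\sigma$-complete and $\sigma$-Dedekind complete, and one checks that such a space coincides with its own universal $\sigma$-completion; moreover any sequence that is $uo$-null is automatically $o$-null in the space itself, because the ``tails'' needed to control $|x_n|\wedge y$ can be pieced together from countably many disjoint components whose suprema exist. So the work here is essentially to recall that for a universally $\sigma$-complete space the passage to $X^u$ adds nothing at the level of sequences: if $x_n\overset{uo}{\to}0$ then $x_n\overset{o}{\to}0$ already in $X$. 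I would phrase this via the known fact (the sequential analogue of \cite[Theorem 3.2]{L-65} / \cite[Corollary 3.12]{L-65}) that $uo$-null sequences are $o$-null in the universal completion, combined with the observation that $X$ is order dense and regular in a universally $\sigma$-complete overspace equal to $X$ itself.

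For the substantive direction $(2)\implies(1)$: assume every $uo$-null sequence in $X$ is $o$-null, and that $X$ is order $\sigma$-complete; I must show $X$ is laterally $\sigma$-complete, since lateral $\sigma$-completeness together with $\sigma$-Dedekind completeness gives universal $\sigma$-completeness. So let $(x_n)_{n\ge 1}$ be a sequence of pairwise disjoint positive vectors in $X$; I want $\sup_n x_n$ to exist in $X$. Form the partial suprema $s_N=\bigvee_{n\le N}x_n$ (these exist since $X$ is order $\sigma$-complete — actually they're just finite suprema). Then $s_N\uparrow$, and I would like to realize $\sup s_N$ inside $X$. The idea is to transfer to the sup-completion: working in $X_s$ (or rather in the sup-completion of the order completion $X^\delta$, which is harmless since everything is countable and $X$ is order dense in $X^\delta$), put $s=\sup_N s_N\in X_s$. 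For every $u\in X_+$ we have $s_N\wedge u\uparrow s\wedge u\in X$ because $s\wedge u\le u\in X$ and $X$ is order $\sigma$-complete; hence $(s_N)$ is $uo$-Cauchy as a sequence in $X$. The hypothesis $(2)$ — in its Cauchy form, which follows from the stated equivalence of $uo$-null and $o$-null for sequences by the standard argument applied to the double-indexed difference sequence — then yields that $(s_N)$ is $uo$-convergent, and being increasing its $uo$-limit is its supremum, which therefore lies in $X$. This mirrors exactly the lateral-completeness step in the proof of Theorem~\ref{Main}, with nets replaced by sequences throughout.

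The main obstacle I anticipate is the reduction of hypothesis $(2)$ from its stated ``$uo$-null $\iff$ $o$-null'' form to the Cauchy form ``every $uo$-Cauchy sequence is $uo$-convergent'' — this is not literally the same statement, though it is the sequential analogue of the first two lines of the proof of Theorem~\ref{Main}, and needs the observation that a $uo$-Cauchy sequence has, for each fixed $y\in X_+$, an order-Cauchy ``truncation'' $x_n\wedge y$, which is order-convergent by $\sigma$-Dedekind completeness, producing a candidate limit; one then shows the candidate is a genuine $uo$-limit by the same Step~1/Step~2 dichotomy as in Theorem~\ref{Main} but using Corollary~\ref{Car} only through countably many parameters. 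A secondary point requiring care is checking that passing to $X^\delta$ and its sup-completion does not disturb $\sigma$-completeness hypotheses; since all the families involved are countable and $X$ is regular in $X^\delta$, the truncations computed in $X^\delta$ agree with those in $X$, so this is routine but should be stated. Once the Cauchy form of $(2)$ is in hand, the lateral $\sigma$-completeness argument is a direct transcription of the corresponding part of Theorem~\ref{Main}.
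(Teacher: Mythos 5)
Your direction $(1)\implies(2)$ is fine in spirit (the paper simply cites \cite[Corollary 3.12]{L-65} and the pull-back to $X$ is routine via regularity), but your direction $(2)\implies(1)$ has a genuine gap, and you have in fact put your finger on it yourself: you need to pass from the hypothesis ``$uo$-null $\iff$ $o$-null for sequences'' to ``every $uo$-Cauchy sequence is $uo$-convergent,'' and this passage is not available. The double-indexed family $(s_N-s_M)$ is a net, not a sequence, so hypothesis (2) does not even apply to it; and even if you knew its truncations were $o$-null, concluding that $(s_N)$ has a $uo$-limit \emph{in $X$} is precisely the completeness assertion you are trying to prove. The ``Step 1/Step 2 dichotomy'' of Theorem \ref{Main} that you propose to transcribe derives its contradiction from the $uo$-Cauchy property under the standing assumption that $X$ is universally complete (so $X^u=X$), which is exactly what is not available here; in your setting the case $\ell\in X^u\setminus X$ is not excluded by that argument. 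The net effect is that your proof never actually uses hypothesis (2) in a valid way.

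The missing idea is much simpler and avoids the sup-completion entirely: a disjoint positive sequence $(x_n)$ is automatically $uo$-null by \cite[Corollary 3.6]{L-65}. Hypothesis (2) then makes it $o$-null, and an $o$-null sequence is order bounded (a tail is dominated by some $y_\beta$, and the finitely many remaining terms are harmless). Hence the partial suprema $y_n=x_1\vee\dots\vee x_n$ form a bounded increasing sequence, which has a supremum by order $\sigma$-completeness; this supremum is $\sup_n x_n$, so $X$ is laterally $\sigma$-complete and therefore universally $\sigma$-complete. This is the paper's argument; note that it applies the hypothesis to the disjoint sequence itself rather than trying to manufacture a Cauchy-completeness statement from it.
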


The implication i) $\implies$ ii) follows from \cite[Corollary 3.12]{L-65}.
Assume conversely that ii) is satisfied and consider a disjoint positive
sequence in $X.$ According to \cite[Corollary 3.6]{L-65} $x_{n}\overset
{uo}{\longrightarrow}0$ and then by ii) $x_{n}\overset{o}{\longrightarrow}0.$
So $\left(  x_{n}\right)  $ is bounded. This shows that the sequence
$y_{n}=x_{1}\vee...\vee x_{n}$ is also bounded and then has a supremum as $X$
is order $\sigma$-complete. This proves that $X$ is laterally $\sigma
$-complete and ends the proof.

It has been proved in \cite[Lemma 2.11]{L-65} that if $Y$ is a regular order
complete sublattice of a vector lattice $X$ then every net in $Y$ that is
order convergent to some $x$ in $X$ is order convergent in $Y$ to the same
limit. Similarly we can prove the following.

\begin{lemma}
\label{LA}Let $Y$ be a vector sublattice of a vector lattice $X.$ Assume that
$Y$ is regular in $X$ and order $\sigma$-complete. If $\left(  y_{n}\right)
\subset Y$ and $y_{n}\overset{o}{\longrightarrow x}$ in $X$ then $x\in Y$ and
$y_{n}\overset{o}{\longrightarrow}x$ in $Y.$
\end{lemma}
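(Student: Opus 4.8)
The plan is to follow the proof of \cite[Lemma 2.11]{L-65} essentially line by line, replacing \emph{order complete} by \emph{order $\sigma$-complete} and nets by sequences; the only substantive point is to manufacture the order limit inside $Y$, after which regularity does the rest.

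First I would reduce to positive sequences. Since $Y$ is a sublattice, $(y_n^+)$ and $(y_n^-)$ lie in $Y$, and the lattice inequality $|y_n^{\pm}-x^{\pm}|\le |y_n-x|$ shows $y_n^{\pm}\overset{o}{\longrightarrow}x^{\pm}$ in $X$; as the conclusion for $(y_n^+)$ and $(y_n^-)$ recombines to the one for $(y_n)$, I may assume $0\le y_n\in Y$, hence $x\ge 0$ in $X$. Fix a net $(z_\beta)$ with $z_\beta\downarrow 0$ in $X$ and $|y_n-x|\le z_\beta$ for $n\ge n_\beta$; then also $|y_n-y_m|\le z_\beta$ for $n,m\ge n_\beta$, so each tail $\{y_n:n\ge N\}$ is order bounded in $X$. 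For each $N$ I would form the tail infimum $v_N=\inf\{y_n:n\ge N\}$ and tail supremum $u_N=\sup\{y_n:n\ge N\}$ in $Y$: the infima exist because $y_n\ge 0$ and $Y$ is order $\sigma$-complete, and the suprema exist as soon as the tail is order bounded above \emph{in} $Y$. One then has $v_N\uparrow$, $u_N\downarrow$, $v_N\le y_n\le u_N$ for $n\ge N$, and, since a $Y$-supremum/infimum that exists agrees with the corresponding one in $X$ when $Y$ is regular, the inequality $|y_n-y_m|\le z_\beta$ ($n,m\ge n_\beta$) upgrades to $u_{n_\beta}-v_{n_\beta}\le z_\beta$ for every $\beta$.

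Next comes the squeeze. The sequence $(u_N-v_N)$ decreases in $Y_+$, so $w:=\inf_N(u_N-v_N)$ exists in $Y$, and $0\le w\le u_{n_\beta}-v_{n_\beta}\le z_\beta$ for all $\beta$; since $z_\beta\downarrow 0$ in $X\supseteq Y$ this forces $w=0$, i.e. $u_N-v_N\downarrow 0$ in $Y$. Both $(v_N)$ and $(u_N)$ are order bounded in $Y$ (by $u_1$), so $x^{\ast}:=\sup_N v_N=\inf_N u_N$ belongs to $Y$ and satisfies $v_N\le x^{\ast}\le u_N$; hence $|y_n-x^{\ast}|\le u_N-v_N$ for $n\ge N$, which gives $y_n\overset{o}{\longrightarrow}x^{\ast}$ in $Y$. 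Finally, $Y$ being regular in $X$, this implies $y_n\overset{o}{\longrightarrow}x^{\ast}$ in $X$ as well, and uniqueness of order limits in $X$ forces $x=x^{\ast}\in Y$, which is exactly the assertion.

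The one genuinely delicate step is the existence of the tail suprema $u_N$ in $Y$: order convergence in $X$ supplies an order bound for the tail only inside $X$, so this is precisely where, beyond $\sigma$-completeness and regularity, one must use that the tail is order bounded in $Y$ — exactly as in the net version \cite[Lemma 2.11]{L-65}. Everything downstream of the construction of $u_N$ and $v_N$ is routine manipulation of monotone sequences and poses no difficulty.
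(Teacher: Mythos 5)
The paper offers no proof of this lemma at all --- it merely remarks that the net version \cite[Lemma 2.11]{L-65} ``can similarly be proved'' --- so your attempt has to stand on its own, and it does not: the step you yourself single out as ``the one genuinely delicate step,'' namely the existence in $Y$ of the tail suprema $u_{N}=\sup\{y_{n}:n\geq N\}$, is a genuine gap that the hypotheses cannot fill. Order $\sigma$-completeness of $Y$ yields $u_{N}$ only if the tail $\{y_{n}:n\geq N\}$ is order bounded above by an element \emph{of} $Y$. Order convergence of $(y_{n})$ to $x$ in $X$ gives only $y_{n}\leq x+z_{\beta}$ for $n\geq n_{\beta}$, and $x+z_{\beta}$ lives in $X$; nothing in the hypotheses forces any tail to be dominated by a member of $Y$. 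You acknowledge that this is exactly what must be used, but you never derive it, and everything downstream (the squeeze $u_{N}-v_{N}\downarrow 0$ and the identification of $x^{\ast}$ with $x$) rests on it.

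The gap is not repairable, because the statement as written is false without an extra order-boundedness assumption. Take $X=\mathbb{R}^{\mathbb{N}}$ (all real sequences with the pointwise order) and $Y=\ell^{\infty}$, which is a Dedekind complete ideal of $X$, hence a regular, order $\sigma$-complete sublattice. Let $y_{n}=ne_{n}$, where $e_{n}$ is the sequence with $1$ in the $n$-th place and $0$ elsewhere. Defining $z_{N}\in X$ by $z_{N}(k)=k$ for $k\geq N$ and $z_{N}(k)=0$ otherwise, one has $z_{N}\downarrow 0$ in $X$ and $0\leq y_{n}\leq z_{N}$ for all $n\geq N$, so $y_{n}\overset{o}{\longrightarrow}0$ in $X$. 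Yet no tail of $(y_{n})$ is order bounded in $\ell^{\infty}$ (a bound $w$ would have to satisfy $w(n)\geq n$), and since an order convergent sequence must have an order bounded tail, $(y_{n})$ is not order convergent in $Y$. If one adds the hypothesis that $(y_{n})$ is order bounded in $Y$, your argument does go through: $u_{N}$ and $v_{N}$ then exist, regularity identifies them with the corresponding suprema and infima computed in $X$, and the squeeze works verbatim (up to the harmless factor $2$ in $|y_{n}-y_{m}|\leq 2z_{\beta}$). As it stands, though, both your proof and the lemma itself need that extra assumption.
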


The next Theorem is a "variant" of Theorem \ref{Main}.

\begin{theorem}
The vector lattice $X$ is universally $\sigma$-complete if and only if is
sequentially uo-complete.
\end{theorem}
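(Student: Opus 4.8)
The plan is to mirror the proof of Theorem \ref{Main}, but working with sequences throughout and replacing the role of order completeness by order $\sigma$-completeness and the role of lateral completeness by lateral $\sigma$-completeness. First I would handle the easy direction: assume $X$ is sequentially uo-complete. Every order Cauchy \emph{sequence} is uo-Cauchy, hence uo-convergent, and because an order Cauchy sequence has an order bounded tail, uo-convergence upgrades to order convergence on that tail; thus $X$ is order $\sigma$-complete. For lateral $\sigma$-completeness, take a disjoint sequence $(x_n)$ of positive vectors and set $y_m=x_1\vee\dots\vee x_m$. For each $u\in X_+$ the sequence $(y_m\wedge u)$ is increasing and bounded by $u$, hence order convergent in $X$ (here one uses that $X$ is order $\sigma$-complete), so $(y_m)$ is uo-Cauchy; by hypothesis it uo-converges to some $x'$, and monotonicity forces $x'=\sup_m y_m\in X$. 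So $X$ is laterally $\sigma$-complete, and together with order $\sigma$-completeness this gives universal $\sigma$-completeness.

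For the converse, assume $X$ is universally $\sigma$-complete and let $(x_n)$ be a uo-Cauchy sequence; splitting into positive and negative parts, assume $(x_n)\subset X_+$. For each $a\in X_+$, the sequence $(x_n\wedge a)$ is order Cauchy and order bounded, hence order convergent to some $\ell_a\in X_+$. Form $\ell=\sup_{a\in X_+}\ell_a\in X_s$ (this supremum exists in the sup-completion of $X$, which I may use since $X$ is order complete being universally $\sigma$-complete; actually one must be a little careful here — see below). Step 1 is to show $\ell_a=\ell\wedge a$ for every $a\in X_+$, which goes through exactly as in the proof of Theorem \ref{Main} using property P6 of $X_s$; nothing in that argument uses more than one index at a time, so it adapts verbatim. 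Step 2 is to show $\ell\in X$. Suppose not; first suppose $\ell\in X_s\setminus X^u$. Pick $0<e\in X$; by Theorem \ref{TA} (or Corollary \ref{Car}) there is $0<u\in X$ with $\ell\ge tu$ for all $t>0$. Setting $L=\sup_n nu$ we reduce to the case $x_n\le\ell=\sup_n nu$, and then the same telescoping estimate as in Theorem \ref{Main} — with the net index $\alpha$ replaced by the integer $n$ and $u_\gamma\downarrow 0$ replaced by an appropriate control sequence — yields $\sup_{j,k\ge m}|x_j-x_k|\wedge u\ge u$ for all $m$, contradicting uo-Cauchyness. If instead $\ell\in X^u\setminus X$, then since $X$ is order dense in $X^u$ there is $0<w\in X$ with $\ell\ge w+\ell\wedge(\text{something})$... more cleanly: if $\ell\in X^u_+\setminus X$ one uses that the increasing sequence $(\ell\wedge na)$ — or rather a suitable countable family extracted from $\{\ell_a\}$ — witnesses a failure of lateral $\sigma$-completeness, contradicting the hypothesis. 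Once $\ell\in X$, Step 1 gives $x_n\wedge a\overset{o}{\to}\ell\wedge a$ for all $a\in X_+$, i.e. $x_n\overset{uo}{\to}\ell$, so $X$ is sequentially uo-complete.

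The main obstacle I anticipate is the bookkeeping in Step 2 when $\ell$ lands in $X^u\setminus X$ rather than in $X_s\setminus X^u$: in the net version of Theorem \ref{Main} this case is absorbed because universal completeness of $X$ forces $X=X^u$, but here $X$ is only universally $\sigma$-complete, so $X^u\setminus X$ can be nonempty and must be excluded by a separate argument invoking lateral $\sigma$-completeness (the point being that $\ell$ would then be the supremum of a countable increasing sequence drawn from $X$ that has no supremum in $X$, or more precisely that one can extract from the uncountable family $\{\ell_a\}$ a countable subfamily whose supremum is already $\ell$, which requires knowing $\ell$ is reached "countably" — this is where Lemma \ref{LA} and the order $\sigma$-completeness of $X$ do the real work). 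A clean way to organize this is: from $\ell\in X^u$ and $\ell\notin X$, pick $0<v\in X$ with $v\le\ell$ and $v\notin$ (principal band stuff); better, observe $\ell\wedge na$ need not be the issue — instead use that $(x_n)$ itself, being a \emph{sequence}, only ever "sees" countably many of the $a$'s, so one restricts attention to a fixed countable cofinal subfamily of $X_+$ from the start, makes $\ell$ a countable supremum, and then $\ell\in X^u\setminus X$ directly contradicts lateral (equivalently, by order $\sigma$-completeness, universal) $\sigma$-completeness. Making that reduction precise at the outset is the delicate part; everything after it is a faithful transcription of the Theorem \ref{Main} argument.
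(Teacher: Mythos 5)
Your treatment of the implication (sequentially uo-complete $\Rightarrow$ universally $\sigma$-complete) is essentially sound: it transcribes the first half of the proof of Theorem \ref{Main} to sequences (order Cauchy sequences have bounded tails, disjoint sequences give uo-Cauchy partial suprema), whereas the paper instead lifts the increasing sequence $y_n=x_1\vee\dots\vee x_n$ to $X^u$, observes it is o-Cauchy there, pulls uo-Cauchyness back to $X$ by regularity, and identifies the uo-limit with the supremum. Your route is fine modulo the folklore step that ``every o-Cauchy sequence converges'' yields order $\sigma$-completeness (one needs that a bounded increasing sequence is o-Cauchy, which uses the Archimedean property and a dominating \emph{net} indexed by the upper bounds); the paper's route avoids this entirely.

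The genuine gap is in the other implication, (universally $\sigma$-complete $\Rightarrow$ sequentially uo-complete), which the paper does not reprove but simply quotes from \cite[Theorem 3.10]{L-65}. Your plan to rerun the second half of Theorem \ref{Main} breaks at exactly the points you flag but do not repair. First, the element $\ell=\sup_{a\in X_+}\ell_a$ is formed in $X_s$, and the sup-completion is only available for \emph{order complete} $X$; a universally $\sigma$-complete vector lattice need not be order complete, so $\ell$ must be formed in $(X^u)_s$ or $(X^\delta)_s$, after which Theorem \ref{TA} and Corollary \ref{Car} only exclude $\ell\in X_s\setminus X^u$ and leave the case $\ell\in X^u\setminus X$ untouched. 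Second, your proposed repair for that case --- restrict to ``a fixed countable cofinal subfamily of $X_+$'' so that $\ell$ becomes a countable supremum reachable by lateral $\sigma$-completeness --- cannot work as stated, because $X_+$ has no countable cofinal subfamily in general and uo-convergence genuinely quantifies over all of $X_+$; to contradict universal $\sigma$-completeness you would have to exhibit a \emph{countable} increasing (or disjoint) family in $X$ whose supremum is $\ell$, and nothing in your sketch produces one. You acknowledge that ``making that reduction precise at the outset is the delicate part,'' but that reduction is precisely the content of the cited \cite[Theorem 3.10]{L-65}; without it the proposal does not prove this direction.
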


\begin{proof}
It follows from \cite[Theorem 3.10]{L-65} that every universally $\sigma
$-complete vector lattice is sequentially uo-complete. Now assume that $X$ is
sequentially uo-complete. If $\left(  x_{n}\right)  $ is bounded above then
the sequence $\left(  y_{n}=x_{1}\vee...\vee x_{n}\right)  $ converges in
$X^{u},$ say $y_{n}\uparrow x\in X^{u}.$ It follows then that $\left(
y_{n}\right)  $ is o-Cauchy so it is uo-Cauchy in $X^{u}.$ Since $X$ is
regular in $X^{u}$ it follows from \cite[Theorem 3.2]{L-65} that $\left(
y_{n}\right)  $ is uo-Cauchy in $X$ and so it is uo-convergent in $X$ to some
$y\in X.$ It is easy now to see that $y=x=\sup x_{n}.$ Assume now that
$\left(  x_{n}\right)  $ is a sequence of positive disjoint elements in $X.$
Let $y_{n}=x_{1}\vee...\vee x_{n}.$ Then $y_{n}\uparrow y=\sup x_{n}\in
X^{u}.$ By the same way we prove that $y\in X.$
\end{proof}

\end{document}